\newtheorem{thm}{Theorem}[section]
\newtheorem{cor}[thm]{Corollary}
\newtheorem{lem}[thm]{Lemma}
\newtheorem{prop}[thm]{Proposition}
\theoremstyle{definition}
\newcommand{\setdef}{\stackrel {\rm {def}}{=}}
\def \N{{\mathbb N}}
\def \Q{{\mathbb Q}}
\def \T{{\mathbb T}}
\def \R{{\mathbb R}}
\def \Log{{\rm{Log}}}
\newcommand{\Cal}{\mathcal}
\def\e{{\rm e}}
\def\eps{{\varepsilon}}
\newcommand{\tend}[3][]{\xrightarrow[#2\to#3]{#1}}
\begin{document}

\title{Mean ergodic theorems in $L^r(\mu)$ and $H^r(\mathbb T)$, $0<r<1$}

\author{\MakeLowercase{el} Houcein \MakeLowercase{el} Abdalaoui}
\address{Laboratoire Math\'ematique Rapha\"el Salem, Universit\'e de Rouen-Normandie, Rouen, France}
\email{elhoucein.elabdalaoui@univ-rouen.fr}

\author{Michael Lin}
\address{Department of Mathematics, Ben-Gurion University of the Negev, Beer-Sheva, Israel}
\email{lin@math.bgu.ac.il}

\subjclass[2010]{Primary:   37A30; secondary : 42B30, 37E10}
\keywords{ergodic theorem, measure preserving transformations, non-integrable functions, $L^r(\mu)$ with $0<r<1$, circle rotations, Hardy spaces  $H^r(\mathbb T)$}

\begin{abstract}
Let $T$ be the Koopman operator of a measure preserving transformation $\theta$ of a probability 
space $(X,\Sigma,\mu)$. We study the convergence properties of the averages 
$M_nf:=\frac1n\sum_{k=0}^{n-1}T^kf$ when $f \in L^r(\mu)$,  $0<r<1$. We prove that if 
$\int |M_nf|^r d\mu \to 0$, then $f \in \overline{(I-T)L^r}$, and show that the converse 
fails whenever $\theta$ is ergodic aperiodic. When $\theta$ is invertible ergodic aperiodic,
 we show that for $0<r<1$ there exists $f_r \in (I-T)L^r$ for which $M_nf_r$ does not converge a.e.  (although $\int |M_nf|^r d\mu \to 0$). We further establish that for $1 \leq p <\frac{1}{r},$ there is a dense $G_\delta$ subset
${\Cal F}\subset  L^p(X,\mu)$ such that $\limsup_n \frac{|T^nh|}{n^r}=\infty$ a.e.  
for any $h \in {\Cal F}$.

When $T$ is induced by an irrational rotation of $\mathbb T$, the Hardy spaces $H^r(\mathbb T)$ 
are $T$-invariant. For $0<r<1$, we prove that 
$H^r(\mathbb T)=\{constants\}\oplus \overline{(I-T)H^r(\mathbb T)}$, and 
$\int |M_nf|^r d\mu \to 0$ for $f \in \overline{(I-T)H^r(\mathbb T)}$. However, there exists 
$f \in (I-T)H^r(\mathbb T)$ such that $M_nf$ does not converge a.e.
\end{abstract}

\maketitle

\section{Introduction}
	
Let $\theta$ be a measure preserving transformation of a probability space $(X,\Sigma,\mu)$
and for $f$ measurable put $Tf(x)=f(\theta x)$. For $1 \le p< \infty$, the linear operator $T$ 
is an isometry of $L^p(\mu)$,  and the mean ergodic theorem yields the $L^p$-convergence of 
the averages $M_n f:=\frac1n\sum_{k=0}^{n-1}T^kf$.
\smallskip

When $0<r<1$, the space $L^r(\mu):=\{f: \int|f|^rd\mu<\infty\}$ is a vector space, which is a complete
metric space under the  (invariant) metric $d(f,g)=\int|f-g|^r d\mu$ \cite[p. 37]{Ru}, and $T$ is an 
isometry of this metric space. Note that an isometry in a complete metric space is continuous.
 Dalibor Voln\'y asked one of us about the ergodic properties of $T$ in $L^r$.

When $T$ is induced by an irrational rotation of $\mathbb T$, the Hardy spaces $H^r(\mathbb T)$ 
are $T$-invariant subspaces of $L^r(\mathbb T)$. For $0<r<1$, we study the ergodic properties of
$T$ in $H^r(\mathbb T)$.

	\section{Ergodic theorems in $L^r(\mu)$, $0<r<1$}

	In this section $0<r<1$ is fixed, and $f_n \to f$ for $\{f_n\} \subset L^r$ means convergence
	in the metric, i.e. $\int|f_n-f|^r d\mu \to 0$.
	
\begin{lem} \label{limit}
	Let $T$ be induced on $L^r(\mu)$ by a measure preserving transformation $\theta$, 
	and let $f \in L^r$.  If $\frac1n\sum_{k=0}^{n-1}T^k f\to g$, then $Tg=g$. Hence 
	$g$ is constant when $\theta$ is ergodic.
\end{lem}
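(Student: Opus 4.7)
The plan is to mimic the classical mean ergodic argument, adapting it to the non-locally-convex setting of $L^r$ with $0<r<1$. The crucial algebraic identity is
\[
T M_n f - M_n f \;=\; \frac{1}{n}\bigl(T^n f - f\bigr),
\]
which comes from telescoping. If I can show that the right-hand side tends to $0$ in the metric of $L^r$, then continuity of $T$ (which is given since $T$ is an isometry on $(L^r,d)$) combined with the hypothesis $M_n f \to g$ will force $TM_n f \to Tg$, hence $Tg-g=0$.

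The estimate for the right-hand side is where the $0<r<1$ regime actually helps, via the $r$-subadditivity $|a+b|^r \le |a|^r+|b|^r$ (so that $\|\cdot\|_r^r$ behaves like an ``$r$-norm''). Explicitly, I would write
\[
\int \Bigl|\frac{T^n f - f}{n}\Bigr|^r d\mu \;=\; \frac{1}{n^r}\int |T^n f - f|^r d\mu \;\le\; \frac{1}{n^r}\Bigl(\int |T^n f|^r d\mu + \int |f|^r d\mu\Bigr) \;=\; \frac{2}{n^r}\int |f|^r d\mu,
\]
using that $T$ preserves $\int|\cdot|^r d\mu$ since $\theta$ is measure preserving. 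As $n\to\infty$ this quantity tends to $0$ (here $n^r\to\infty$ even though $r<1$), so $TM_n f - M_n f \to 0$ in $L^r$.

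Combining the two halves: from $M_n f\to g$ and continuity of $T$, $TM_n f\to Tg$; from the displayed estimate, $TM_n f - M_n f \to 0$. Subtracting gives $Tg=g$, proving the first assertion. For the second assertion, since $g\in L^r$ it is finite a.e., and $Tg=g$ means $g\circ\theta=g$ a.e., i.e.\ $g$ is $\theta$-invariant; ergodicity of $\theta$ then forces $g$ to be constant a.e. I do not anticipate a real obstacle here: the only subtlety is remembering that in $L^r$, $0<r<1$, the metric scales as $d(cf,0)=|c|^r d(f,0)$ rather than $|c|\,d(f,0)$, so one should divide by $n^r$ rather than $n$ in the estimate; the decay is slower than in the classical case but still to zero.
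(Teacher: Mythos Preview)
Your proof is correct and follows essentially the same route as the paper: both use the telescoping identity $TM_nf - M_nf = \frac{1}{n}(T^nf - f)$, bound the right-hand side by $2n^{-r}\int|f|^r\,d\mu$ via $r$-subadditivity and the measure-preserving property, and combine with the isometry of $T$ to conclude $Tg=g$. The only cosmetic difference is that the paper phrases the final step as showing $M_nf \to Tg$ via a triangle inequality, whereas you argue $TM_nf \to Tg$ by continuity of $T$; these are the same argument.
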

\begin{proof}
Clearly $d(Tg,M_nf) \le d(Tg,TM_nf)+ d(TM_nf,M_nf)$. Since $T$ is an isometry, \newline
${d(Tg,TM_nf)=d(g,M_nf) \to 0}$. We also have
$$
d(TM_nf,M_nf)=\int |TM_nf-M_nf|^rd\mu=\int\Big|\frac{T^nf}n-\frac{f}n\Big|^rd\mu \le
2\int\Big|\frac{f}n\Big|^rd\mu \to 0,
$$
so $M_nf \to Tg$, hence $Tg=g$.
\end{proof}

\begin{cor}
Let $0<r<1$ and $T$ as above. If $f \in L^r$ satisfies $\frac1n\sum_{k=0}^{n-1}T^k f\to g$,
then $\frac1n\sum_{k=0}^{n-1}T^k (f-g) \to 0$.
\end{cor}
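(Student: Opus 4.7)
The plan is to observe that the corollary is essentially a one-line consequence of \lemref{limit} combined with the linearity of the averaging operator $M_n$ and the translation invariance of the metric $d$.

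First I would invoke \lemref{limit} applied to $f$: since $M_n f \to g$ by hypothesis, the lemma yields $Tg = g$. By induction this gives $T^k g = g$ for every $k \ge 0$, hence $M_n g = \frac{1}{n}\sum_{k=0}^{n-1} T^k g = g$ for every $n \ge 1$.

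Next, since $L^r$ is a vector space and $M_n$ is linear, $M_n(f-g) = M_n f - M_n g = M_n f - g$. The metric $d(h_1,h_2) = \int |h_1-h_2|^r \, d\mu$ is translation invariant, so
\[
d\bigl(M_n(f-g),\,0\bigr) \;=\; \int |M_n f - g|^r \, d\mu \;=\; d(M_n f, g) \;\longrightarrow\; 0
\]
by the hypothesis $M_n f \to g$. This completes the argument.

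There is no real obstacle: the only subtlety worth a remark is that even though $d$ is not induced by a norm (the triangle inequality for $d^{1/r}$ would fail), the identities $M_n(f-g) = M_n f - g$ and $d(h_1,h_2) = d(h_1 - h_2, 0)$ are purely algebraic and hold in any vector space equipped with a translation invariant metric, so the computation above is valid in $L^r$ for $0 < r < 1$.
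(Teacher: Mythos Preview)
Your argument is correct and is exactly the immediate deduction the paper intends: the corollary is stated without proof, as a direct consequence of \lemref{limit} via $M_n(f-g)=M_nf-M_ng=M_nf-g$. There is nothing to add.
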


The next theorem is a partial analogue of the result in Banach spaces proved in \cite{Yo}. Yosida
used the Hahn-Banach separation theorem, but in $L^r$ there are no non-zero continuous
 functionals \cite[p. 37]{Ru}, so a different proof is needed.

\begin{thm}\label{main1}
Let $T$ be induced on $L^r(\mu)$ by a measure preserving transformation, and let 
$f \in L^r$.  If $f \in(I-T)L^r$, then $\frac1n\sum_{k=0}^{n-1}T^k f \to 0$. If
	$\frac1n\sum_{k=0}^{n-1}T^k f \to 0$, then $f \in \overline{(I-T)L^r}$.
\end{thm}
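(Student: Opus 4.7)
The plan is to establish both inclusions by direct telescoping identities, bypassing the Hahn-Banach/duality approach (unavailable here, as the authors note). For the forward implication, I would start from a representative $f = g - Tg$ with $g\in L^r$ and telescope the Cesaro sum to $M_n f = \frac{1}{n}(g - T^n g)$. The elementary subadditivity $(a+b)^r \le a^r + b^r$ (valid on $[0,\infty)$ for $r\in(0,1]$) combined with the measure-preservation identity $\int |T^n g|^r d\mu = \int|g|^r d\mu$ immediately yields $\int|M_nf|^r d\mu \le 2 n^{-r}\int|g|^rd\mu \to 0$.

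For the converse, I would build approximating witnesses in $(I-T)L^r$ directly from $f$. Using $I-T^k = (I-T)(I+T+\cdots+T^{k-1})$ for $k\ge 1$ and switching the order of summation produces
\[
f - M_n f = \frac{1}{n}\sum_{k=0}^{n-1}(I-T^k)f = (I-T)h_n, \qquad h_n \egdef \frac{1}{n}\sum_{j=0}^{n-2}(n-1-j)\,T^jf.
\]
Each $h_n$ is a finite combination of translates $T^j f \in L^r$ and hence lies in $L^r$, so $(I-T)h_n\in (I-T)L^r$. The hypothesis now reads $d(f,(I-T)h_n)=\int|M_nf|^rd\mu\to 0$, placing $f$ in $\overline{(I-T)L^r}$.

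The main conceptual pitfall to avoid is a cleaner density argument. Although $T$ is an isometry, subadditivity of $x^r$ gives only $d(M_nf,M_ng)\le n^{1-r}d(f,g)$, a Lipschitz constant that blows up with $n$. Consequently the set $\{f:M_nf\to 0\}$ is not a priori closed, and one cannot extend the easy case $f\in(I-T)L^r$ to its closure by continuity. The virtue of the explicit telescoping identity above is precisely that it hands one, for each $f$ with vanishing averages, a concrete sequence in $(I-T)L^r$ converging to $f$, sidestepping this obstruction entirely.
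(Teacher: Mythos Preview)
Your proof is correct and follows essentially the same approach as the paper: both directions use the telescoping identity $M_n(g-Tg)=\frac1n(g-T^ng)$ for the forward implication, and for the converse both exhibit $f-M_nf=(I-T)h_n$ with $h_n=\frac1n\sum_{k=1}^{n-1}\sum_{j=0}^{k-1}T^jf$ (you merely swap the order of summation to write $h_n$ explicitly). Your closing remark on why one cannot simply pass to the closure via continuity of $M_n$ is a helpful addition not made explicit in the paper.
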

\begin{proof}
Let $f=h-Th$. Then, as before,
$$
\int |M_n f|^rd\mu= \int\Big|M_nh-TM_nh\Big|^rd\mu=
\int\Big|\frac{f}n -\frac{T^nh}n\Big|^rd\mu \to 0.$$ 
\medskip

Assume now that $M_nf \to 0$. Put 
$$
g_n:=f-M_nf=\frac1n\sum_{k=0}^{n-1} (f-T^kf)=\frac1n\sum_{k=1}^{n-1}(I-T)\sum_{j=0}^{k-1}T^jf.
$$
 Hence $g_n \in (I-T)L^r$, and $d(f,g_n)=\int|M_nf|^r d\mu\to 0$, so $f \in \overline{(I-T)L^r}$.
\end{proof}

We will show below that there is no complete analogue of Yosida's result.

\begin{thm} \label{sato}
Let $T$ be induced on $L^r$, $0<r<1$, by an ergodic probability preserving transformation $\theta$,
 and let $f \in L^r$. Then $f \in (I-T)L^r$ if and only if 
\begin{equation} \label{GH}
\sup_{n\ge 1} \int \big|\sum_{k=0}^{n-1}T^kf\big|^r d\mu< \infty.
\end{equation}
\end{thm}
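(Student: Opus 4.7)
The forward direction is a short telescoping calculation: if $f = h - Th$ with $h \in L^r$, then $\sum_{k=0}^{n-1}T^kf = h - T^n h$, and the subadditivity $|a-b|^r \le |a|^r + |b|^r$ (valid for $0<r<1$) combined with $T$-invariance of $\mu$ gives $\int|S_n f|^r d\mu \le 2\int|h|^r d\mu$ for every $n$.

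For the reverse direction, set $M := \sup_n \int|S_n f|^r d\mu < \infty$. Then $\int|M_n f|^r d\mu = n^{-r}\int|S_n f|^r d\mu \le M/n^r \to 0$, so by \thmref{main1} we already have $f \in \overline{(I-T)L^r}$. The substance of the theorem is to promote this from the closure to $(I-T)L^r$ itself, and this is where ergodicity of $\theta$ must enter.

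The natural candidate for a primitive is $h_n := -\frac{1}{n}\sum_{k=0}^{n-1}S_k f$: a short calculation using $(I-T)S_k f = f - T^k f$ gives $(I-T)h_n = M_n f - f \to -f$ in $L^r$, so any $L^r$-subsequential limit $h$ of $\{h_n\}$ would, by continuity of $I-T$, satisfy $(I-T)h = -f$ and hence $f = (I-T)(-h) \in (I-T)L^r$. The principal obstacle is producing such a subsequential limit. The classical $L^p$ arguments of Yosida and Sato extract it via weak compactness (for $1<p<\infty$) or uniform integrability (for $p=1$); but $L^r$ with $0<r<1$ has trivial continuous dual and no useful weak topology, and the straightforward estimate $\int|h_n|^r d\mu \le n^{1-r}M$ on the candidates is not uniform, so neither tool is available.

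To bypass this I would replace $h_n$ by better approximate primitives $\tilde h_N$ constructed via Rokhlin towers. Writing $\psi_N := \sum_{k=0}^{N-1}|S_k f|^r$, one has $\int\psi_N d\mu \le NM$ and hence $\mu\{\psi_N < 2NM\} \ge \tfrac12$ by Markov's inequality. A Rokhlin tower of height $N$ with base $B_N \subset \{\psi_N < 2NM\}$ of measure $\le 1/N$ supports the definition $\tilde h_N(\theta^k y) := -S_k f(y)$ for $y \in B_N$, $0 \le k < N$, and $\tilde h_N := 0$ off the tower; then $(I-T)\tilde h_N$ equals $f$ off a set of $\mu$-measure $O(1/N)$, while
\[
\int|\tilde h_N|^r d\mu = \int_{B_N}\psi_N d\mu \le 2NM\cdot\mu(B_N) \le 2M
\]
uniformly in $N$. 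Combining this uniform $L^r$-bound with the convergence $(I-T)\tilde h_N \to f$ in $L^r$ and an ergodic-theoretic tightness argument ruling out escape of mass, one extracts a subsequence $\tilde h_{N_j}$ converging in $L^r$ to some $h$ with $(I-T)h = f$, so $f \in (I-T)L^r$.
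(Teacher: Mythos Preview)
Your forward direction is correct and identical to the paper's.

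For the converse, the paper does not argue directly: it observes that \eqref{GH} implies
\[
\sup_{N\ge 1}\frac1N\sum_{n=1}^N\int\Big|\sum_{k=0}^{n-1}T^kf\Big|^r d\mu<\infty,
\]
and then simply invokes Sato's theorem \cite[Corollary, p.~287]{Sato} to conclude $f\in(I-T)L^r$. So the substance of the converse is entirely outsourced.

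Your attempt to give a self-contained argument via Rokhlin towers is a natural strategy, and the construction of $\tilde h_N$ with $\int|\tilde h_N|^r\,d\mu\le 2M$ uniformly and $(I-T)\tilde h_N\to f$ in $L^r$ is essentially sound (modulo minor technicalities at the top of the tower). But the final step---``an ergodic-theoretic tightness argument ruling out escape of mass, one extracts a subsequence $\tilde h_{N_j}$ converging in $L^r$''---is a genuine gap, not a routine detail. A uniform $L^r$-bound with $0<r<1$ gives no compactness whatsoever: the sequence $g_n=n^{1/r}1_{[0,1/n]}$ on $[0,1]$ has $\int|g_n|^r=1$ for all $n$, yet $g_n\to 0$ in measure and no subsequence converges in $L^r$ to anything nonzero. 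You explicitly identified earlier in your write-up that weak compactness and uniform integrability are unavailable in $L^r$; the phrase ``tightness ruling out escape of mass'' does not name a mechanism that substitutes for them. This is exactly the point where Sato's argument (or that of Alonso--Hong--Obaya \cite{AO}) does real work, and it cannot be waved through.
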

\begin{proof}
If $f=(I-T)g$ with $g \in L^r$, then $\sum_{k=0}^{n-1}T^kf =g-T^ng$, so
$$
\sup_{n\ge 1}\int \Big|\sum_{k=0}^{n-1}T^kf\Big|^r d\mu \le 2 \int|g|^rd\mu < \infty.
$$

Assume now that \eqref{GH} holds. Then 
$$
\sup_{N\ge 1} \frac1N\sum_{n=1}^N\int \Big|\sum_{k=0}^{n-1}T^kf\Big|^r d\mu < \infty.
$$
We can therefore apply Sato's \cite[Corollary, p. 287]{Sato} and obtain that $f \in (I-T)L^r$.
\end{proof}
{\bf Remark.} Theorem \ref{sato} is valid for $1 <r< \infty$ as a special case of Browder \cite{Br}, 
and for $r=1$ by Lin and Sine \cite{LS}. One of the novelties in \cite{AO} and \cite{Sato} is in
 the results for  $0<r<1$.

\medskip

When $\theta$ is ergodic and $f \in L^1(\mu)$, the ergodic theorems yield that $M_nf \to \int fd\mu$
in $L^1$-norm and a.e., so the limit does not depend on the ergodic transformation.  When $f \in L^r$, $0<r<1$,
and $M_nf$ converges in the $L^r$ metric, the limit is still a constant, by Lemma \ref{limit},
but we show that  the constant may depend on $\theta$.
\medskip

\begin{prop} \label{major}
There exist two ergodic  measure preserving transformations on the unit circle, with induced 
operators $T$ and $S$, and a function $f \in L^r$, $0 <r<\frac13$, such that, in the $L^r$ metric,
$\frac1n\sum_{k=0}^{n-1}T^k f \to \mathbf{1}$ and $\frac1n\sum_{k=0}^{n-1}S^kf \to \mathbf{0}$,
\end{prop}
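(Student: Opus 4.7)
The plan is to exploit the fact that $L^r$ for $0<r<1$ admits no nonzero continuous linear functionals, so no ``integral'' singles out a unique ergodic mean and different measure preserving transformations can assign different limits to the same $f$, provided $f\notin L^1$ (for $f\in L^1$ the $L^1$ ergodic theorem together with the inclusion $L^1\subset L^r$ on a probability space already forces the common limit $\int f\,d\mu$). By \thmref{main1}, the two required convergences are equivalent to $f\in\overline{(I-S)L^r}$ and $f-\mathbf{1}\in\overline{(I-T)L^r}$, so the task is to produce ergodic $T$, $S$ on $\mathbb{T}$ and $f\in L^r\setminus L^1$ for which the constant $\mathbf{1}$ is realized as a limit in $L^r$ of differences $(I-S)h_n-(I-T)g_n$. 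Note that for a single transformation one cannot have $\mathbf{1}\in\overline{(I-T)L^r}$, since $M_n^T\mathbf{1}=\mathbf{1}\not\to 0$, so the cancellation between $T$ and $S$ is essential.

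Concretely, I would take $T=R_\alpha$ and $S=R_\beta$, two irrational rotations with carefully tuned Diophantine properties: $\alpha$ chosen so that the continued-fraction denominators $(q_k)$ grow rapidly and yield Rokhlin-tower bases $F_k$ of measure $1/q_k$ with $\{F_k+j\alpha : 0\le j<q_k\}$ nearly tiling $\mathbb{T}$ (via the three-distance theorem), and $\beta$ chosen ``generic'' relative to those towers. The candidate $f$ would be built as a series of ``tower spikes'' $\psi_k=n_k\chi_{F_k}$ with $(n_k)$ a rapidly growing subsequence of $(q_k)$ and $n_k\mu(F_k)=1$. The subadditivity $|a+b|^r\le|a|^r+|b|^r$ for $0<r<1$ together with $\sum_k n_k^{r-1}<\infty$ places $f\in L^r$, while $\int f\,d\mu=\infty$. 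The key calculations are: for each $k$, $M_{n_k}^T\psi_k$ equals the indicator of the tower (up to $L^r$-error controlled by the tower-complement), hence is close to $\mathbf{1}$ in $L^r$; and $M_n^S\psi_k$ has small $L^r$-norm uniformly in $n$, because the $R_\beta$-orbit of $F_k$ spreads rather than tiles.

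The principal obstacle is the aggregation step: the naive superposition $\sum_k\psi_k$ yields $T$-averages whose contribution from each scale $k$ with $n_k\lesssim n$ is of order $\mathbf{1}$, so the partial sums risk blowing up rather than converging to $\mathbf{1}$. Resolving this will require a more delicate construction---replacing each $\psi_k$ by a telescoping difference paired with the previous scale, or composing with a suitable cocycle---so that the cross-scale interactions in $M_n^T f$ collapse to a single $\mathbf{1}$ at each intermediate scale, while in $M_n^S f$ all spikes remain spread out and contribute negligibly in the limit. Obtaining the $L^r$-convergence for \emph{every} $n\to\infty$ (and not merely along the subsequence $n_k$ of tower heights) is precisely where the subadditive $|\cdot|^r$-estimates must be combined with the three-distance geometry of the $T$-Rokhlin towers, and this quantitative balance is the natural place for the restriction $r<\tfrac13$ to emerge.
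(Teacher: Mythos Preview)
Your proposal contains a genuine logical error and is, by your own admission, incomplete at the crucial step.

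The reduction you invoke is wrong. \thmref{main1} does \emph{not} give an equivalence: it says $f\in(I-T)L^r$ implies $M_nf\to 0$, and $M_nf\to 0$ implies $f\in\overline{(I-T)L^r}$. The direction you need---that $f\in\overline{(I-T)L^r}$ forces $M_nf\to 0$---is precisely what \emph{fails} for $0<r<1$; this is the content of \propref{example} and \propref{kv}. In particular your assertion ``for a single transformation one cannot have $\mathbf{1}\in\overline{(I-T)L^r}$'' is false: \propref{example} (which is deduced \emph{from} the present proposition) shows $\mathbf{1}\in\overline{(I-T)L^r}$ for every irrational rotation when $0<r<\tfrac13$. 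So you cannot reduce the task to closure membership; the $L^r$-convergence of the averages must be proved directly. Beyond this, your construction sketch stops exactly at the hard part: you name the ``principal obstacle'' of controlling $M_n^Tf$ for all $n$ (not just along tower heights) and leave it open, offering only the phrase ``telescoping differences paired with the previous scale''.

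The paper takes a different route. It does not build $f$ from scratch but invokes Buczolich's construction \cite{Bu}, which for two rationally independent irrational rotations already produces $f=\sum_{j\ge0}(-1)^jf_j$ with $M_n(T)f\to 1$ and $M_n(S)f\to 0$ a.e. The paper's contribution is (i) to choose the free parameters $(\eps_j)$ of \cite{Bu} so that $\sum_j\eps_j^{1-3r}<\infty$, which puts $f\in L^r$ for $r<\tfrac13$ via the estimate $\int|f_j|^r\,d\mu\le \mu(\mathrm{supp}\,f_j)\cdot\sup|f_j|^r\le\eps_j(\prod_{k\le j}\eps_k)^{-r}\le\eps_j^{1-3r}$; and (ii) to upgrade the a.e.\ convergence to $L^r$-convergence by splitting $f=g_{J_0}+\sum_{j\ge 2J_0}(f_{2j+2}-f_{2j+1})$, applying dominated convergence to the bounded $g_{J_0}$, and using Buczolich's uniform bounds $|\sum_{k=0}^M(f_{2j+2}-f_{2j+1})(T^kx)|\le 2\eps_{2j+2}^{-1}\sup|f_{2j+1}|$ together with the small support of $f_{2j+2}-f_{2j+1}$ to control the tail in $L^r$. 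The threshold $r<\tfrac13$ comes out of the arithmetic $\eps_j^2<\prod_{k<j}\eps_k$, not from three-distance geometry.
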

\begin{proof}
The two transformations $T$ and $S$  are given by irrational rotations $\alpha,\beta$ which are independent
over the rationals. Buchzolich \cite{Bu} constructed a measurable function $f$ (necessarily not
integrable) such that $\frac1n\sum_{k=0}^{n-1}T^k f \to 1$ and $\frac1n\sum_{k=0}^{n-1}S^kf \to 0$ a.e.
\smallskip

{\it Claim: It is possible to choose the parameters in \cite{Bu} so that $f \in L^r$, for $0<r< 1/3$.}

\noindent
Let $\eps_0\in (\frac12,1)$, and for $j \ge 1$ define inductively
$$
\frac1{\eps_j}=\Big[\frac1{\sqrt{\prod_{k=0}^{j-1}\eps_k}}\Big] +1.
$$
Then $\eps_1=\frac12 <\eps_0$. For any $j$ we have $\eps_j<1$ and $1/\eps_j$ an integer. 
By definition $\eps_j^2 <\prod_{k=0}^{j-1} \eps_k$. Since $\eps_j < 1$, $\ \eps_{j+1} < \eps_j < \eps_0$ 
for $j \ge 1$.  By induction, 
$\prod_{k=0}^j \eps_k \le  \eps_0^{j+1}$, hence $\eps_j^2 < \prod_{k=0}^{j-1}\eps_k \le \eps_0^j$.
Thus, for $0<r< 1/3$, we have
\begin{equation} \label{Lr}
\sum_{j \geq 0}\eps_{j}^{1-3r} \le \sum_{j=0}^\infty \eps_0^{j(1-3r)/2}= 
\sum_{j=0}^\infty (\eps_0^{(1-3r)/2})^j <+\infty.
\end{equation}
The sequence $\eps_j$ satisfies all the assumptions in \cite[p. 250]{Bu}.

The function $f$, constructed in \cite{Bu}, is given by $f=\sum_{j=0}^{+\infty}(-1)^{j}f_j$,
with the definition of $f_j$ dependent on the fixed irrationals $\alpha$ and $\beta$; the support of 
$f_j$, denoted by $E^{j}$, satisfies $\mu(E^j)<\eps_{j}.$ Furthermore, the construction yields

(i) $\int f_jd\mu=1$ for $j \ge 0$.

(ii)  $\displaystyle \sup_{x \in X} |f_{2j}(x)| \leq \eps_{2j}^{-1} \sup_{x \in X} |f_{2j-1}(x)|,$

(iii) $\displaystyle \sup_{x \in X} |f_{2j+1}(x)| \leq \eps_{2j+1}^{-1} \sup_{x \in X} |f_{2j}(x)|.$	

\noindent
By the construction $|f_0| \le 1$, so with $f_{-1} \equiv 1$ we have
 \begin{equation} \label{suprema}
\sup_{x \in X} |f_k(x)| \leq \eps_k^{-1} \sup_{x \in X} |f_{k-1}(x)|= 1/\prod_{j=0}^k \eps_j
\quad \rm{ for } \quad k \ge 0.
\end{equation}
Applying  the triangle inequality in $L^r$ and the estimate $\eps_j^2 < \prod_{k=0}^{j-1}\eps_k$,  we get
$$
\int_{X} |f|^r d\mu \leq \sum_{j \geq 0} \int |f_j|^r d\mu \leq \sum_{j=0}^\infty \mu(E^j) \sup_{x \in X}|f_j(x)|^r 
 \leq \sum_{j=0}^\infty  \eps_{j} \Big(\frac{1}{\prod_{k=0}^{j} \eps_{k}}\Big)^r \leq \sum_{j=0}^\infty  \eps_j^{1-3r}.
$$
Therefore, by \eqref{Lr}, $f \in L^r$ when $0<r<\frac13$, which proves the claim.
\smallskip

We now prove the claimed  convergence in $L^r$ metric, for fixed $0<r<\frac13$.  
We denote $M_n(T):=\frac1n\sum_{k=0}^{n-1}T^k$ and $M_n(S):=\frac1n\sum_{k=0}^{n-1}S^k$. We use the choice of $\eps_j$ made above, and the corresponding $f$, which is in $L^r$ 
by the claim.

We first  prove that $M_n(T)f \to\bf 1$ in $L^r$. Let $\eps>0$ and fix $J_0$ such that 
$\sum_{j=2J_0}^\infty \eps_{2j+1}^{1-3r} <\eps/2$.

In the construction in \cite{Bu}, it is shown that there exist sets $\hat X_{2j+1}$ with 
$\mu(\hat X_j)> 1-2\eps_{2j+1}$,  such that $|M_n(T)(f_{2j+2}-f_{2j+1})(x)| < \eps_{2j+1}$ 
for every $x \in \hat X_{2j+1}$, $n>0$ and $j\ge 0$. Put $\hat X=\cap_{j \ge 2J_0}\hat X_{2j+1}$.
Then $\mu(\hat X) > 1-\eps/2$.
Put $g_{J_0}:=f_0+\sum_{j=0}^{2J_0-1} (f_{2j}-f_{2j+1})$. Recall that
$f=\sum_{j=0}^\infty (-1)^jf_j=\sum_{j=0}^\infty (f_{2j}-f_{2j+1}$. Since 
$$
|M_N(T)f(x)-1| \le |M_N(T)g_{J_0}(x)-1| +\sum_{j=2J_0}^\infty |M_N(T)(f_{2j+2}-f_{2j+1})(x)|,
$$
the estimates for $x\in \hat X$ yield that on $\hat X$
$$
 |M_N(T)f-1| \le
|M_N(T)g_{J_0}-1| +\sum_{j=2J_0}^\infty |M_N(T)(f_{2j+2}-f_{2j+1})| $$
$$
\le   |M_N(T)g_{J_0}-1| + \sum_{j=2J_0}^\infty \eps_{2j+1} 
\le   |M_N(T)g_{J_0}-1| + \eps/2.
$$
By the ergodic theorem $|M_N(T)g_{J_0}-1| \to 0$ almost surely, since by the construction in
 \cite{Bu} $g_{J_0}$ is bounded with integral 1. Hence for $x \in\hat X$ and  $N\ge N_1(x)$
we have on $\hat X$ 
$$ |M_N(T)f(x)-1|< \eps.$$
Whence $|M_N(T)f-1|$ converge almost surely to $0$ (as shown in \cite{Bu}).
By the triangle inequality in $L^r$,
\begin{equation} \label{norms}
\int |M_N(T)f-1|^r d\mu \le
\int |M_N(T)g_{J_0}-1|^r d\mu + \sum_{j=2J_0}^\infty \int |M_N(T)(f_{2j+2}-f_{2j+1})|^r d\mu.
\end{equation}
Since $g_{J_0}$ is bounded, by the dominated convergence theorem 
$\int |M_N(T)g_{J_0}-1|^r d\mu \to 0$ as $N \to\infty$. We estimate the series in \eqref{norms}.
For each $j$, let $E^{2j+2,2j+1}$ be the support of $f_{2j+2}-f_{2j+1}$. It is shown in 
\cite[p. 251]{Bu} that for some sequence $(N_{j})$, we have

(i) $\mu(\bigcup_{0}^{N_{2j+2}}(T^{-k}E^{2j+2,2j+1}))<2\eps_{2j+2}.$

(ii) $\big|\sum_{k=0}^{M}(f_{2j+2}-f_{2j+1})(T^kx)\big|<2 \eps_{2j+2}^{-1}\sup_{y \in X}|f_{2j+1}(y)|$, 
for all $M \in \mathbb{N}^*$ and $x \in X$, strengthening \eqref{suprema}.

\noindent
Applying (ii), \eqref{suprema}  and (i) we see that 
$$
	\int |M_N(T)(f_{2j+2}-f_{2j+1})|^r d\mu =
	\int_{\bigcup_{k=0}^{N_{2j+2}}(T^{-k}E^{2j+2,2j+1})} |M_N(T)(f_{2j+2}-f_{2j+1})|^r d\mu \le
$$
$$
\frac1{N^r} 2^{r} \eps_{2j+2}^{-r}\Big(\frac{1}{\prod_{k=0}^{2j+1}\eps_{k}}\Big)^{r} \mu(\bigcup_{k=0}^{N_{2j+2}}(T^{-k}E^{2j+2,2j+1}))
\le \frac1{N^r} 2^{r+1} \eps_{2j+2}\eps_{2j+2}^{-r}\Big(\frac{1}{\prod_{k=0}^{2j+1}\eps_{k}}\Big)^{r}
$$
Hence, since $\eps_{2j+2}^2 \le \prod_{k=0}^{2j+1}\eps_k$, by \eqref{Lr}
$$
 \sum_{j=2J_0}^\infty \int |M_N(T)(f_{2j+2}-f_{2j+1})|^r d\mu \le
 \frac{2^{r+1}}{N^r} \sum_{j=2J_0}^{+\infty} \eps_{2j+2}^{1-3r} \le C\frac{2^{r+1}}{N^r}.
$$
Letting $N \to \infty$, \eqref{norms} yields 
$$\int |M_N(T)f-1|^r d\mu  \to 0.$$
This complete the proof of the first part of the proposition.
\medskip

The proof for $S$ is similar, as noted in \cite{Bu} (we have not listed all the properties of the $f_j$ obtained
in \cite{Bu}). For the limit of $M_n(S)f$ to be 0 and not 1, we define in this case
 $g_{J_0}=\sum_{j=0}^{2J_0-1} (f_{2j}-f_{2j+1})$, so now $\int g_{J_0}=0$.
\end{proof}
{\bf Remark.}  By \cite{Bu} we have also a.e. convergence $M_n(T)f \to 1$ and $M_n(S)f \to 0$.

\begin{prop} \label{example}
Let $T$ be induced by an irrational rotation of the circle, and let $0<r<\frac13$. Then
 ${\bf 1} \in \overline{(I-T)L^r}$.
\end{prop}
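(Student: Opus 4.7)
The plan is to recycle the Buczolich-type function $f = \sum_{j=0}^\infty (-1)^j f_j$ constructed in the proof of Proposition \ref{major}, but apply it to the given irrational rotation $T$. Concretely, given $T$, I choose any second irrational rotation $S$ whose angle is independent of $T$'s over $\Q$; then the construction of Proposition \ref{major} produces $f \in L^r$ with $M_n(T) f \to \mathbf{1}$ in the $L^r$ metric. The Corollary to Lemma \ref{limit} combined with Theorem \ref{main1} already tells us that $f - \mathbf{1} \in \overline{(I-T) L^r}$, so it suffices to show $f \in \overline{(I-T) L^r}$ as well: then $\mathbf{1} = f - (f - \mathbf{1})$ lies in this closed vector subspace.

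To show $f \in \overline{(I-T) L^r}$, I would group the alternating series as $f = \sum_{j=0}^\infty (f_{2j} - f_{2j+1})$. Each $f_j$ is bounded by \eqref{suprema} and satisfies $\int f_j \, d\mu = 1$, so each summand $f_{2j} - f_{2j+1}$ is a bounded, mean-zero function and thus lies in $L^2$. Since $T$ is an ergodic isometry of $L^2$, the classical mean ergodic theorem places $f_{2j} - f_{2j+1}$ in the $L^2$-closure of $(I-T) L^2$. Jensen's inequality on the probability space gives $\int |g|^r d\mu \le \bigl( \int |g|^2 d\mu \bigr)^{r/2}$, so $L^2$-convergence implies convergence in the $L^r$ metric, and therefore $f_{2j} - f_{2j+1}$ also belongs to $\overline{(I-T) L^r}$.

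The partial sums $S_N = \sum_{j=0}^{N-1}(f_{2j} - f_{2j+1})$ then lie in the closed subspace $\overline{(I-T) L^r}$, and the summability estimate $\sum_j \int |f_{2j} - f_{2j+1}|^r d\mu < \infty$ already established in the proof of Proposition \ref{major} (via \eqref{Lr} and the bound $\eps_j^2 < \prod_{k < j} \eps_k$) yields $S_N \to f$ in the $L^r$ metric. Closedness of $\overline{(I-T) L^r}$ then gives $f \in \overline{(I-T) L^r}$, and the argument is complete.

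The main obstacle I anticipate is the transfer of information across function spaces: to place each bounded mean-zero summand into the $L^r$-closure of $(I-T) L^r$ one seems forced to route the argument through the $L^2$ mean ergodic theorem and the continuous embedding $L^2 \hookrightarrow L^r$. Working intrinsically inside $L^r$ is awkward because, as noted earlier in the paper, $L^r$ with $0<r<1$ has no nonzero continuous linear functionals, so standard Hahn--Banach style separation arguments are unavailable.
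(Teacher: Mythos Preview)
Your argument is correct, and it follows a genuinely different route from the paper's.

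The paper also starts by picking a second rotation $S$ and invoking the function $f$ from Proposition~\ref{major}, obtaining $f-\mathbf{1}\in\overline{(I-T)L^r}$ exactly as you do. But to place $f$ itself in $\overline{(I-T)L^r}$, the paper uses the \emph{other} conclusion of Proposition~\ref{major}, namely $M_n(S)f\to 0$, which by Theorem~\ref{main1} gives $f\in\overline{(I-S)L^r}$; it then proves a separate Lemma (\ref{WCT}) showing that $\overline{(I-T)L^r}=\overline{(I-S)L^r}$ for any two irrational rotations, via density of the orbit $\{\alpha^n\}$ and an $L^1$-approximation. Your approach bypasses both the $S$-convergence and Lemma~\ref{WCT} entirely: you exploit instead the internal structure of $f$ as an $L^r$-summable series of bounded mean-zero terms, each of which lies in $\overline{(I-T)L^2}\subset\overline{(I-T)L^r}$ by the $L^2$ mean ergodic theorem and H\"older. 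This is more elementary and self-contained; the paper's route, by contrast, isolates the independently interesting fact that the coboundary closure $\overline{(I-T)L^r}$ does not depend on which irrational rotation is chosen.
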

\begin{proof}
Let $T$ be induced by $\alpha$, and let $S$ be induced by some $\beta$ so that $\alpha$ and $\beta$ 
are independent over the rationals. We need the following lemma.



\begin{lem}\label{WCT}
Fix  $0<r<1$. Then    $\overline{(I-T)L^r}=\overline{(I-S)L^r}$.
\end{lem}
\begin{proof} By symmetry, it is enough to show $\overline{(I-T)L^r} \supset \overline{(I-S)L^r}.$
Obviously, we have 
	$$\overline{(I-T)L^r} \supset \overline{(I-T^n)L^r}.$$
By density of the orbits of $\alpha$, there exists a sequence $(n_j)$ of integers such that 
$\alpha^{n_j} \to \beta$. Hence for each $f \in L^1(X,\mu)$ we have $\|T^{n_j}f -Sf\|_1 \to 0$
(by approximation; easy for $f$ continuous). By H\"older's inequality 
$\int |f|^r d\mu \le (\int |f|d\mu)^r$,
so for $f \in L^1$ we obtain $\int|T^{n_j}f-Sf|^r d\mu \to 0$.
But $L^1(X,\mu)$ is dense in $L^r(X,\mu).$ Therefore, for each $f \in L^r$, 
we have  $\int|T^{n_j}f-Sf|^r d\mu \to 0$, which yields
$(I-T^{n_j})f \to  (I-S)f $ in $L^r$.
This implies that $(I-S)f \in \overline{(I-T)L^r}$ since, for each $j$, 
$(I-T^{n_j})(f) \in \overline{(I-T)L^r}$. This completes the proof of the Lemma.		
\end{proof}  

\noindent
{\it Proof of Proposition \ref{example}}. Let $ f$ be given by Proposition \ref{major}. 
Then $f \in L^r$, and by Theorem \ref{limit}, $f-{\bf 1} \in \overline{(I-T)L^r}$ and 
$f \in  \overline{(I-S)L^r}$. Whence, by Lemma \ref{WCT}, ${\bf 1} \in\overline{(I-T)L^r}$.
\end{proof}

By Proposition \ref{example}, for $0<r<1/3$ and any irrational rotation,
 ${\bf 1}\in\overline{(I-T)L^r}$, while its averages do not converge to zero (but do converge).
Kosloff and Voln\'y suggested the next general proposition (with an outline of its proof), 
in which the averages of a function in $\overline{(I-T)L^r}$, $0<r<1$, do not converge.  

\begin{prop} \label{kv}
Let $\theta$ be an ergodic aperiodic measure preserving tansformation
of a probability space $(X,\Sigma,\mu)$ and denote $Tf=f \circ \theta$. Then for $0<r<1$
there exists $f \in \overline{(I-T)L^r}$ such that $M_n(T)f$ does not converge in $L^r$ metric.
\end{prop}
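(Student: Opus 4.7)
The plan is to produce, for each $k$, a coboundary $f_k\in (I-T)L^r$ with small $d(f_k,0)$ but comparatively large $d(M_{n_k}(T)f_k,0)$ for a well-chosen $n_k\to\infty$. This will show that the family of continuous linear operators $\{M_n(T)\}$ is not equicontinuous at $0$ on the F-space $\overline{(I-T)L^r}$, and the uniform boundedness principle for F-spaces will then supply a single $f$ in that closure with $\sup_n d(M_n(T)f,0)=\infty$, hence with $M_n(T)f$ divergent. In short, a gliding hump delivered by Banach--Steinhaus.

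For the construction, I fix $n_k\to\infty$ and invoke Rokhlin's lemma (applicable since $\theta$ is ergodic aperiodic) to choose a set $B_k$ such that $B_k,\theta^{-1}B_k,\ldots,\theta^{-(2n_k-1)}B_k$ are pairwise disjoint, with $\mu(B_k)>0$ free to be prescribed afterwards. Set
$$f_k\egdef a_k\bigl(\chi_{B_k}-T^{n_k}\chi_{B_k}\bigr)=(I-T)g_k,\qquad g_k\egdef a_k\sum_{j=0}^{n_k-1}T^j\chi_{B_k}\in L^r,$$
so that $f_k\in(I-T)L^r\subset\overline{(I-T)L^r}$. Since $T^j\chi_{B_k}=\chi_{\theta^{-j}B_k}$ and the first $2n_k$ levels of the tower are disjoint, $\chi_{B_k}$ and $T^{n_k}\chi_{B_k}$ have disjoint supports, and likewise the two blocks making up $n_k\,M_{n_k}(T)f_k$ are supported on disjoint sub-towers. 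A direct calculation then gives
$$d(f_k,0)=2a_k^r\mu(B_k),\qquad d(M_{n_k}(T)f_k,0)=\frac{a_k^r}{n_k^r}\cdot 2n_k\mu(B_k)=2a_k^r\mu(B_k)\,n_k^{1-r}.$$
Choosing $a_k^r\mu(B_k)=1/(2k)$ and $n_k=\lceil k^{1/(1-r)}\rceil$ makes $d(f_k,0)\to 0$ while $d(M_{n_k}(T)f_k,0)\ge 1$.

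To finish, I observe that $\overline{(I-T)L^r}$ is a closed linear subspace of the F-space $L^r$ and hence is itself an F-space, and each $M_n(T)$ restricts to a continuous linear self-map on it. The $f_k$ above exhibit non-equicontinuity at $0$ of the family $\{M_n(T)\}_{n\ge 1}$ on $\overline{(I-T)L^r}$, so the uniform boundedness principle for F-spaces furnishes $f\in\overline{(I-T)L^r}$ with $\sup_n d(M_n(T)f,0)=\infty$; since $L^r$-convergent sequences are $d$-bounded, $M_n(T)f$ does not converge in $L^r$. The step I expect to be the crux is precisely the $n_k^{1-r}$ blow-up in the ratio $d(M_{n_k}f_k,0)/d(f_k,0)$: this is exactly where $0<r<1$ is essential, for the triangle inequality of a genuine norm would rule it out when $r\ge 1$.
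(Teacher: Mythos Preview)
Your proof is correct and follows a genuinely different route from the paper's. The paper invokes the construction of Kosloff--Voln\'y \cite{KV}: for a fixed $\alpha\in(r,1)$ they take an explicit $f=\sum_j(f_j-T^{d_j}f_j)$ whose normalised sums $n^{-1/\alpha}\sum_{k=0}^{n-1}T^kf$ converge in distribution to a nontrivial symmetric $\alpha$-stable law; a tail estimate shows the partial sums form a Cauchy sequence in $L^r$, so $f\in\overline{(I-T)L^r}$, while the stable limit together with \cite[Theorem 3.4]{Bi} forces $\int|M_nf|^r\,d\mu\asymp n^{r(1/\alpha-1)}\to\infty$. As a bonus, the same $f$ has a.e.\ divergent averages (see the remark following the proposition). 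Your argument is instead self-contained: Rokhlin towers produce the gliding hump $f_k\in(I-T)L^r$ with $d(f_k,0)\to 0$ and $d(M_{n_k}f_k,0)\ge 1$, and the uniform boundedness principle for F-spaces (e.g.\ \cite[Theorem 2.6]{Ru}) turns this non-equicontinuity into the existence of some $f\in\overline{(I-T)L^r}$ with $\sup_n d(M_nf,0)=\infty$. What the paper's approach buys is an explicit $f$ with the additional a.e.\ divergence; what yours buys is independence from the external \cite{KV} machinery and a transparent display of where $0<r<1$ enters, namely the $n_k^{1-r}$ blow-up that would be forbidden for a contraction on a normed space.
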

\begin{proof}
Fix $0<r<1$ and fix $r<\alpha <1$. It is proved in \cite[Theorem 1]{KV} that there exists
a measurable (real) $f$ such that $\frac1{n^{1/\alpha}}\sum_{k=0}^{n-1}T^kf$
converges in distribution to a symmetric $\alpha$-stable distribution with parameter
$0< \sigma<1$. We show that $f$ is the desired function.

By formula (5.13) in \cite[Theorem 1, p. 576]{Fe} (see also \cite[Property 2.5, p. 63]{Zo}), when 
$g$ has $\alpha$-stable symmetric distribution and $r<\alpha$, we have $\int |g|^r d\mu <\infty$.  

{\it Claim: There exist $C$ and $K_0$, such that when $g$ has a symmetric $\alpha$-stable 
distribution with parameter $\sigma <1$, for every $r< \alpha$ and $K>K_0$,}
$$
\int_{\{|g| > K\}}|g|^r d\mu \le C \sigma^\alpha K^{r-\alpha}.
$$
For the proof, fix $g$ as in the claim, with distribution
$F(x)=\mu\big\{\omega :~~ g(\omega) \leq x\big\}, ~~x\in \mathbb{R},$
and following \cite{leo}, put $q(x)=1-F(x)+F(-x), ~~x\in \mathbb{R}.$
Then $q(x)=\mu\big\{\omega:~~ |g(\omega)|>x\big\}$ for $x \ge 0$.
By symmetry of $F$ we have $q(x)=2(1-F(x)).$
We further have, by \cite[p.367, B. 3]{leo},
$$\lim_{x \to +\infty}x^{\alpha} q(x) =\sigma^\alpha,$$
Whence,
$$\lim_{x \to +\infty}x^{\alpha} \mu\Big\{\omega ~~:~~ |g(\omega)| > x\Big\}=2 \sigma^\alpha.$$
Now, write
$$ \int_{\{|g| > K\}}|g|^r d\mu=\int_{\{|x| > K\}}|x|^r dF(x)=-2\int_{\{|x| > K\}}|x|^r dq(x),$$
and apply an integration by parts to get
$$  \int_{\{|g| > K\}}|g|^r d\mu=K^rq(K)+2r\int_{K}^{+\infty}x^{r-1}q(x)dx.$$
Let $K_0 >0$ such that for any $K>K_0$, we have
$$|K^\alpha q(K)-\sigma^\alpha|<\sigma^\alpha.$$
Then, for any $K>K_0$,
$$K^\alpha q(K) \leq 2 \sigma^\alpha,$$
and 
$$K^r q(K) \leq 2 \sigma^\alpha K^{r-\alpha}.$$
Therefore,
\begin{align}
\int_{\{|g| > K\}}|g|^r d\mu &\leq 2 \sigma^\alpha K^{r-\alpha}+2\sigma^\alpha r
\int_{K}^{+\infty}x^{r-1}x^{-\alpha} dx\\
&\leq 2 \sigma^\alpha K^{r-\alpha}+2\sigma^\alpha\frac{2r}{\alpha-r}  K^{r-\alpha}.
\end{align}
The claim thus follows by taking $C=\max\big\{2,\frac{2r}{\alpha-r}\big\}.$
\medskip

By the construction in \cite{KV}, $f$ is of the form $f=\sum_{j=1}^\infty(f_j-T^{d_j}f_j)$,
where $f_j=g_j 1_{\{2^j <g_j <2^{j^2}\}}$, with $g_j$ having a symmetric $\alpha$-stable 
distribution with parameter $\sigma = j^{-1/\alpha}$. Using the triangle inequality in $L^r$ 
and the claim, for $\log_2K_0 < n<m$ we obtain
$$
\int \Big| \sum_{j=n}^m(f_j-T^{d_j}f)\Big|^rd\mu \le
2\sum_{j=n}^m \int_{\{|g_j| >2^j\}}|g_j|^r d\mu \le 2 C\sum_{j=n}^m \frac1j 2^{j(r-\alpha)}.
$$
Since $\sum_{j=1}^\infty \frac1j 2^{j(r-\alpha)}<\infty$, the partial sums of $f$  are a Cauchy
 sequence in $L^r$ metric, so converge to $f$ in $L^r$, and since the partial sums are in $(I-T)L^r$
by their definition, $f$ is in $\overline{(I-T)L^r}$.

We denote below convergence in distribution by $\Rightarrow$. Let $\xi \not\equiv 0$ be a random
 variable on $(X',\mu')$ with the symmetric $\alpha$-stable distribution such that
$n^{-1/\alpha}\sum_{k=0}^{n-1}T^k \Rightarrow \xi$. Since $\xi$ has a continuous distribution, 
using \cite[Theorem 2.1]{Bi}, we obtain that
$$
\Big|n^{-1/\alpha}\sum_{k=0}^{n-1}T^kf\Big|^r \Rightarrow |\xi|^r.
$$
As noted above, $\int |\xi|^r d\mu'<\infty$.  
By  \cite[Theorem 3.4]{Bi} we have
$$
0< a:=\int |\xi|^r d\mu' \le \liminf_{n\to \infty} 
\int \big|n^{-1/\alpha}\sum_{k=0}^{n-1}T^kf\big|^rd\mu .
$$

\noindent Denoting $p=1/\alpha$, the last inequality yields that for  large $n$
$$\int \big|n^{-1}\sum_{k=0}^{n-1}T^kf\big|^rd\mu=
n^{r(p-1)}\int \big|n^{-1/\alpha}\sum_{k=0}^{n-1}T^kf\big|^rd\mu >\frac12a n^{r(p-1)}\to \infty.
$$
Hence $M_n(T)f$ does not converge in $L^r$.
\end{proof}

{\bf Remark.} {\it The averages $M_n(T)f$ of the above $f$ do not a.e. converge.} 
Indeed, if $M_n(T)f \to g$ a.e., then
$$
\frac1{n^{1/\alpha}}\sum_{k=0}^{n-1} T^kf =a
n^{1-\alpha^{-1}} M_n(T)f \to 0 \quad \text{a.e.}
$$
contradicting the convergence in distribution to $\xi \not\equiv 0$.

\begin{prop} \label{No-ae}
Let $T$ be induced by an ergodic aperiodic measure preserving transformation $\theta$
on a Lebesgue space $(X,\Sigma,\mu)$. Then for any $0<r<1$ there exists 
$f_r \in (I-T)L^r(\mu)$ such that $M_nf_r$ does not converge a.e.
\end{prop}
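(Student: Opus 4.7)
The plan is to realize $f_r$ as a coboundary $f_r = g_r - Tg_r$ for a judiciously chosen $g_r \in L^r(\mu)$, and arrange that $T^n g_r/n$ fails to converge to $0$ almost everywhere. Since $M_n f_r = (g_r - T^n g_r)/n$ and $g_r/n \to 0$ a.e., any a.e.\ limit $L$ of $M_n f_r$ would force $T^n g_r/n \to -L$ a.e. But $M_n f_r \to 0$ in the $L^r$ metric by Theorem~\ref{main1}, and both a.e.\ convergence and $L^r$-metric convergence imply convergence in measure, whose limit is a.e.\ unique; hence $L = 0$ a.e., contradicting our design. So it suffices to exhibit $g_r \in L^r$ with $\limsup_n |T^n g_r|/n > 0$ on a set of positive measure.

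To build $g_r$ I invoke Rokhlin's lemma, available because $\theta$ is ergodic and aperiodic on a Lebesgue space. For each $k \ge 1$ pick a base $B_k$ such that $B_k, \theta B_k, \dots, \theta^{H_k - 1}B_k$ are pairwise disjoint and cover at least $1 - 2^{-k}$ of $X$, with heights $H_k = 2^k$ (or any rapidly increasing sequence). Let $A_k := \theta^{H_k-1}B_k$ be the top level of the $k$-th tower, so $\mu(A_k) = \mu(B_k) \le 1/H_k$, and set
$$ g_r := \sum_{k=1}^{\infty} H_k\,\mathbf{1}_{A_k}. $$

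The crucial use of $0 < r < 1$ appears in verifying $g_r \in L^r$: the subadditivity of $t \mapsto t^r$ on $[0,\infty)$ (an inequality unavailable for $r \ge 1$) gives $g_r^r \le \sum_k H_k^r \mathbf{1}_{A_k}$ pointwise, so $\int g_r^r\,d\mu \le \sum_k H_k^{r-1} < \infty$ for the chosen $H_k$. In particular $g_r$ is a.e.\ finite, so $T^n g_r$ is well defined for every $n$, and $f_r = g_r - Tg_r \in (I-T)L^r$.

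It remains to show $\limsup_n |T^n g_r|/n \ge 1$ a.e. For $x$ lying at level $j$ of the $k$-th tower with $0 \le j \le H_k - 2$, the iterate $\theta^{H_k - 1 - j}x$ lies in $A_k$, and therefore $T^{H_k - 1 - j} g_r(x) \ge H_k$, giving $|T^n g_r(x)|/n \ge H_k/(H_k - 1 - j) \ge 1$ for $n = H_k - 1 - j \in [1, H_k - 1]$. The set of $x$ failing this condition for the $k$-th tower (either outside the tower or on its top level) has measure at most $2^{-k} + 1/H_k$, which is summable in $k$, so Borel--Cantelli gives that for a.e.\ $x$ the usable condition holds for all but finitely many $k$, producing infinitely many $n_k \ge 1$ with $|T^{n_k} g_r(x)|/n_k \ge 1$. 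The main difficulty I anticipate is precisely this balancing act: making $g_r$ concentrated enough to lie in $L^r$ while unbounded enough that $T^n g_r/n$ persistently exceeds a fixed threshold, and the subadditivity of $t^r$ for $r < 1$ is exactly what renders the balance feasible.
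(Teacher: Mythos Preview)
Your argument is correct and follows essentially the same route as the paper: build a spiky $g_r\in L^r$ via Rokhlin towers and take $f_r=(I-T)g_r$, so that $M_nf_r=(g_r-T^ng_r)/n$ fails to converge a.e. The paper's variant first constructs $h\in L^1$ (Proposition~\ref{conze2}) with $\limsup_n n^{-r}T^nh=\infty$ a.e., then sets $g_r=h^{1/r}$; this yields $\limsup_n T^ng_r/n=\infty$ outright, so no appeal to Theorem~\ref{main1} or uniqueness of limits in measure is needed. Your direct $L^r$ construction is slightly more economical, at the cost of that extra step.

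Two small points. First, since $\theta$ is not assumed invertible, the forward tower $B_k,\theta B_k,\dots,\theta^{H_k-1}B_k$ is not the version of Rokhlin's lemma available; use instead $(\theta^{-j}B_k)_{0\le j<H_k}$ disjoint (as the paper does) and put the mass on $B_k$: if $x\in\theta^{-j}B_k$ with $1\le j\le H_k-1$ then $\theta^jx\in B_k$ and the same estimate goes through. Second, to conclude $\limsup_n|T^ng_r|/n\ge1$ (not merely $\sup_n$), you need the indices $n_k(x)$ to be unbounded; this holds for a.e.\ $x$, since otherwise pigeonhole would give a fixed $n$ with $T^ng_r(x)\ge H_k\to\infty$, contradicting $g_r<\infty$ a.e.
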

The proof will use the following proposition.
\begin{prop}\label{conze2}
Let $T$ be induced by an ergodic aperiodic measure preserving transformation $\theta$
on a Lebesgue space $(X,\Sigma,\mu)$. Then there exists $0\le h \in L^1(\mu)$ such that 
for every $0<r<1$ we have $\limsup_n n^{-r}T^nh(x)=\infty$ a.e.
\end{prop}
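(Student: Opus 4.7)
The plan is to build $h$ explicitly as a superposition of rescaled indicator functions of bases of Rokhlin towers of geometrically growing height, arranged so that every sufficiently tall tower forces a large value of $T^{j}h$ at some level $j$ of the tower, and so that these levels can be seen to tend to infinity along a subsequence for a.e.\ $x$.

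Since $\theta$ is aperiodic on a Lebesgue space, Rokhlin's lemma produces, for each $k\ge 1$, a measurable set $B_k\subset X$ whose translates $B_k,\theta^{-1}B_k,\ldots,\theta^{-(N_k-1)}B_k$ (with $N_k:=2^k$) are pairwise disjoint and whose union $T_k$ satisfies $\mu(T_k)>1-2^{-k}$, so $\mu(B_k)\le 1/N_k$. Put $c_k:=N_k/k^2$, $h_k:=c_k\mathbf{1}_{B_k}$, and $h:=\sum_{k\ge 1}h_k\ge 0$. Integrating,
$$\int h\,d\mu=\sum_{k\ge 1} c_k\mu(B_k)\le\sum_{k\ge 1} 1/k^2<\infty,$$
so $h\in L^1(\mu)$.

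To verify the conclusion, I would apply Borel--Cantelli twice: $\sum_k\mu(T_k^c)<\infty$ and $\sum_k\mu(B_k)<\infty$, so for a.e.\ $x$ and all sufficiently large $k$ one has $x\in T_k\setminus B_k$. Then there is a unique $j_k(x)\in\{1,\ldots,N_k-1\}$ with $\theta^{j_k(x)}x\in B_k$, giving $T^{j_k(x)}h(x)\ge h_k(\theta^{j_k(x)}x)=c_k$. A pigeonhole argument shows that $j_k(x)\to\infty$ along some subsequence for a.e.\ $x$: otherwise the sequence $\{j_k(x)\}$ would be eventually bounded by some $M=M(x)$, and pigeonhole would force some fixed $j^{*}\le M$ to satisfy $j_k(x)=j^{*}$ for infinitely many $k$, whence $h(\theta^{j^{*}}x)\ge c_k\to\infty$, contradicting the a.e.\ finiteness of $h$ along the (countable) orbit of $x$. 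Along a subsequence $(k_m)$ with $j_{k_m}(x)\to\infty$, for every fixed $r\in(0,1)$,
$$\frac{T^{j_{k_m}}h(x)}{j_{k_m}(x)^{r}}\ge\frac{c_{k_m}}{N_{k_m}^{r}}=\frac{2^{(1-r)k_m}}{k_m^{2}}\longrightarrow\infty,$$
which yields $\limsup_n T^{n}h(x)/n^{r}=\infty$ simultaneously for every $r\in(0,1)$.

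The main obstacle is converting the pointwise bound $T^{j_k}h(x)\ge c_k$ (which by itself only gives $\sup_n T^{n}h(x)=\infty$) into a genuine $\limsup$ statement: one has to rule out the possibility that the large values of $T^{n}h(x)$ cluster at a bounded set of indices $n$. The combination of the pigeonhole argument with the a.e.\ finiteness of $h$ on every orbit, which is automatic from $h\in L^{1}(\mu)$, supplies exactly this, and it critically uses that $c_k\to\infty$.
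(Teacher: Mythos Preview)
Your proof is correct and follows essentially the same construction as the paper: Rokhlin towers of height $2^k$ with $h$ a sum of rescaled indicators of the tower bases, and the same lower bound $T^{j_k}h/j_k^r \ge 2^{(1-r)k}/k^2$. The only cosmetic differences are that the paper uses the coefficient $1/(k^2\mu(B_k))$ in place of your $N_k/k^2$, and it passes from $\sup$ to $\limsup$ by the one-line observation that a sequence of a.e.\ finite terms with infinite supremum has infinite $\limsup$, rather than via your Borel--Cantelli/pigeonhole argument.
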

 \begin{proof} We first note that $\|n^{-r}T^ng\|_1 \to 0$ for $g \in L^1$ implies 
$\liminf |n^{-r}T^ng|=0$.

We use Rokhlin's lemma  for non-invertible transformations \cite{HS},
\cite{AC}\footnote{\cite{AC} does not require the additional assumption that $\theta$ is 
	onto, used in \cite{HS}.}
to construct the function $h$.  Let $\varepsilon_n=\e^{-n}$ and $k_n=2^n$. 
By Rokhlin's lemma there exists $B_n \in \Sigma$ such that $(\theta^{-j} B_n)_{0\le j <k_n}$
are disjoint and $k_n\mu(B_n)= \mu( \cup_{j=0}^{k_n-1}\,\theta^{-j}B_n) > 1-\varepsilon_n$.
	 Put $X_n:=\cup_{j=0}^{k_n-1}\,\theta^{-j}B_n$ and define
	$$
	h := \sum_{n=1}^\infty n^{-2} \frac1{\mu(B_n)} 1_{B_n}.
	$$ 
Then $\int h\,d\mu=\sum_{n=1}^\infty  n^{-2} < \infty$, so $0\le h \in L^1(\mu)$.

Let $x \in X_n \setminus B_n$. Then there exists $1\le j \le k_n-1$ such that 
$x \in \theta^{-j}B_n$.  For $0<r<1$ we then have
	 $$
\frac{T^jh(x)}{j^r} \ge \frac{h(\theta^j x)}{k_n^r} \ge 
	 \frac{n^{-2}}{\mu(B_n)}\cdot \frac1{k_n^r} = \frac{n^{-2}k_n^{1-r}}{k_n\mu(B_n)} 
	 \ge \frac{k_n^{1-r}}{n^2} =\frac{2^{n(1-r)}}{n^2}.
	 $$
We now fix $r\in(0,1)$. Given $M>0$, for $n>N_M$ we have $2^{n(1-r)}/n^2 >M$, so 
	  $$
	 \mu\Big(\Big\{x: \sup_{j\ge 1} \frac{T^jh(x)}{j^r} >M \Big\} \Big) \ge 
	 \mu(X_n \setminus B_n)=(k_n-1)\mu(B_n) >1-\varepsilon_n - k_n^{-1}   \qquad (n>n_M).
$$
Letting $n \to \infty$ we obtain $\sup_j j^{-r}T^jh >M$ a.e. Since this is for any $M>0$,  
	 we obtain $\limsup_{j\to \infty}j^{-r}T^jh= \infty$ a.e.
\end{proof}

{\it Proof of Proposition \ref{No-ae}.} Let $0\le h \in L^1$ the function obtained in 
Proposition \ref{conze2}.  For $0<r<1$ put $g_r=h^{1/r}$ and $f_r=(I-T)g_r$. Then 
$M_n(T) f_r=\frac1n(g_r-T^ng_r)$. But
$\limsup(T^ng_r/n)^r=\limsup T^nh/n^r =\infty$ a.e., so $M_n(T)f_r$ does not
 converge a.e. $\hfill \square$
 \medskip

{\bf Remark.} When $T$ is induced by a probability preserving transformation 
on $(X,\Sigma, \mu)$, Theorem \ref{main1} and Fatou's Lemma yield that if 
$f \in (I-T)L^r$, then $\liminf |M_n(T)f|=0$ a.e.
\smallskip

In Proposition \ref{conze2} we exhibit a positive function $h$ in  $L^1$ for which for any 
$r \in (0,1)$ we have $\sup_n \frac{T^nh}{n^r}=\infty$. In Proposition \ref{delB} below, 
we will establish that there is an abundance of such functions in $L^1$,  and in the more general setting,  for a given $r$ there are many functions in $L^p$, for any $1\le p <1/r$, with
$\sup_n \frac{|T^nh|}{n^r}=\infty$.  Note that when $g \in L^p$, $1<p<\infty$, we have 
$n^{-1/p}T^ng \to 0$ a.e. by the pointwise ergodic theorem.

We will use  the following special case of a theorem by del Junco and Rosenblatt \cite{delJB}. Its proof is inspired by Garsia's proof of Banach's principle \cite{G}. 

\begin{thm}\cite[Theorem 1.1]{delJB}\label{LJB}. Fix $1\le p\le \infty$ and
let $(T_n)$ be a sequence of linear maps on $L^p(X,\mu)$ which are continuous in measure. 
Assume that for every $\epsilon > 0$ and $K > 0$, there exists $h \in L^p(X,\mu)$
 with $\|h\| \le 1$ such that $\mu\Big\{x ~~:~~\sup_{n \geq 1}|T_nh|>K\Big\} >1-\epsilon$. 
Then there is a dense $G_\delta$ subset ${\Cal F} \subset L^p(X,\mu)$ such that
  $\sup_n |T_nh|=\infty$ a.e.   for any $h \in {\Cal F}$.
\end{thm}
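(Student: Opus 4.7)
The plan is a Baire category argument in the complete metric space $L^p(X,\mu)$. For each pair of positive integers $(K,m)$, set
\[
U_{K,m} := \bigl\{h \in L^p : \mu\{x : \sup_{n\ge1}|T_n h(x)| > K\} > 1 - \tfrac1m\bigr\}.
\]
The condition $\sup_n|T_nh|=\infty$ a.e.\ is equivalent to $h\in\bigcap_{K,m}U_{K,m}$, a countable intersection, so by Baire it suffices to show each $U_{K,m}$ is open and dense; the desired dense $G_\delta$ set $\mathcal F$ is then this intersection.

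For openness I would introduce the truncated maximal operator $T^{*,N}h := \max_{1 \le n \le N}|T_n h|$, which is continuous in measure as a finite maximum of operators continuous in measure. Since $\{\sup_n|T_nh|>K\} = \bigcup_N \{T^{*,N}h > K\}$ is an increasing union, $U_{K,m} = \bigcup_N \{h : \mu\{T^{*,N}h > K\} > 1 - 1/m\}$; each set in this union is open because the right-continuity of the distribution of $T^{*,N}h$ furnishes a $K'>K$ for which $\mu\{T^{*,N}h > K'\} > 1 - 1/m$ still holds, and continuity in measure of $T^{*,N}$ propagates this strict inequality to $L^p$-nearby points.

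Density is the main step. Given $f_0 \in L^p$ and $\delta>0$, I would try to produce $f = f_0 + \delta g \in U_{K,m}$ with $\|g\|\le 1$ supplied by the hypothesis, exploiting the pointwise sublinear estimate
\[
T^{*,N}(f_0 + \delta g) \ge \delta\, T^{*,N}g - T^{*,N}f_0.
\]
Since each $T^{*,N}f_0$ is finite a.e., one may pick $C=C(N)$ with $\mu\{T^{*,N}f_0 > C\} < 1/(4m)$. Applying the hypothesis with threshold $K' > (K+C)/\delta$ and tolerance $1/(4m)$ yields $g$ with $\|g\|\le 1$ and $\mu\{\sup_n |T_n g|>K'\} > 1 - 1/(4m)$; by continuity of measure one then finds $\tilde N \ge N$ with $\mu\{T^{*,\tilde N}g > K'\} > 1 - 1/(4m)$. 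On the intersection of the two good sets, of measure exceeding $1 - 1/(2m)$, the sublinear estimate gives $T^{*,\tilde N}(f_0+\delta g) > \delta K' - C > K$, placing $f$ in $U_{K,m}$.

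The main obstacle is the circular dependence $K' \leftarrow C \leftarrow N$ combined with the requirement $N \ge \tilde N$, where $\tilde N$ is itself determined by the $g$ chosen for the given $K'$; enlarging $\tilde N$ may enlarge $T^{*,\tilde N}f_0$ and force $C$ to grow further. When $T^*f_0 < \infty$ a.e.\ the quantity $C(N)$ stays bounded uniformly in $N$ and the scheme closes directly. In the contrary case the argument must be split on $E := \{T^*f_0 = \infty\}$ and its complement, exploiting the monotone blow-up $T^{*,N}f_0 \to \infty$ on $E$ together with a careful tracking of the growth of $T^{*,N}g$ on $E$ to prevent cancellation from wiping out the estimate. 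This is Garsia's perturbation technique (\cite{G}) adapted by del Junco and Rosenblatt, and the precise bookkeeping is carried out in the proof of Theorem 1.1 of \cite{delJB}.
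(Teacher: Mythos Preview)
The paper does not prove this theorem; it is quoted verbatim as \cite[Theorem~1.1]{delJB} and used as a black box, with only the remark that its proof ``is inspired by Garsia's proof of Banach's principle \cite{G}.'' So there is no in-paper proof to compare your attempt against.

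That said, your sketch is on the right track and matches the Garsia/del~Junco--Rosenblatt framework the paper alludes to: the sets $U_{K,m}$ are the correct ones, the openness argument via the truncated maximal functions $T^{*,N}$ is sound, and the intersection $\bigcap_{K,m}U_{K,m}$ is indeed the desired $G_\delta$. Your density argument is also complete and correct in the case $T^*f_0<\infty$ a.e., since then $C$ can be chosen once and for all (bounding $T^*f_0$ rather than $T^{*,N}f_0$), breaking the circularity you flag.

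Where your write-up falls short of a proof is precisely where you say it does: when $\mu\{T^*f_0=\infty\}>0$, you state that one must split on $E=\{T^*f_0=\infty\}$ and invoke ``careful tracking'' to prevent cancellation, but you do not carry this out and instead point back to \cite{delJB}. This is an honest acknowledgment, but it means your argument is not self-contained --- it defers the one genuinely delicate step to the same reference the paper already cites. In effect your proposal and the paper end up in the same place: both rely on \cite{delJB} for the full proof.
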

 
\begin{prop}\label{delB} 
Let $T$ be induced by an ergodic aperiodic probability preserving transformation $\theta$
on a Lebesgue space $(X,\Sigma,\mu)$, and fix $ r \in (0,1)$. Then for any $1\le p < 1/r$ 
there is a dense $G_\delta$ subset ${\Cal F}\subset L^p(X,\mu)$, such that 
 $\limsup_n \frac{|T^nh|}{n^r}=\infty$ a.e.  for any $h \in {\Cal F}$.
\end{prop}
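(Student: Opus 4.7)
The plan is to deduce the proposition from the del Junco--Rosenblatt criterion of \thmref{LJB}, applied to the operators $T_n := T^n/n^r$ acting on $L^p(X,\mu)$. Each $T_n$ is a bounded linear operator of norm $n^{-r}$ (since $T$ is an isometry of $L^p$), and in particular is continuous in measure. Hence, once we verify that for every $\eps>0$ and every $K>0$ there exists $h \in L^p$ with $\|h\|_p \le 1$ such that
\[
\mu\Big\{x :\ \sup_{n\ge 1}\frac{|T^nh(x)|}{n^r}>K\Big\} > 1-\eps,
\]
\thmref{LJB} will yield a dense $G_\delta$ subset ${\Cal F}\subset L^p(X,\mu)$ on which $\sup_n |T^nh|/n^r = \infty$ a.e. Since the sequence $|T^nh(x)|/n^r$ is finite for every fixed $n$, unboundedness of the sequence forces $\limsup_n|T^nh(x)|/n^r=\infty$, which is the desired conclusion.

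The only substantive step is the verification of the hypothesis, which I will carry out by a Rokhlin-tower construction modelled on \propref{conze2}, adjusted for the $L^p$-norm rather than the $L^1$-norm. Given $\eps>0$ and $K>0$, apply Rokhlin's lemma to obtain, for each sufficiently large integer $k$, a set $B=B_k \in \Sigma$ such that the sets $B, \theta^{-1}B, \dots, \theta^{-(k-1)}B$ are disjoint and $k\mu(B) > 1-\eps/2$. Put
\[
h := \mu(B)^{-1/p}\, 1_B,
\]
so that $\|h\|_p = 1$. For each $x$ in the tower $X_k := \bigcup_{j=0}^{k-1}\theta^{-j}B$ with $x\notin B$, there is a unique $j(x)\in\{1,\dots,k-1\}$ with $\theta^{j(x)}x\in B$, and
\[
\sup_{n\ge 1}\frac{|T^nh(x)|}{n^r} \ge \frac{T^{j(x)}h(x)}{j(x)^r} = \frac{\mu(B)^{-1/p}}{j(x)^r} \ge \frac{\mu(B)^{-1/p}}{k^r}.
\]
Using $\mu(B)\le 1/k$, the right-hand side is at least $k^{1/p-r}$. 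Since $p<1/r$, we have $1/p-r>0$, so choosing $k$ large enough forces $k^{1/p-r}>K$. The measure of the set $X_k\setminus B$ is $(k-1)\mu(B)\ge 1-\eps/2-\mu(B) > 1-\eps$ for $k$ large, completing the verification.

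The only potential obstacle is checking that the $L^p$ normalization is compatible with the tower geometry: one needs the gain $\mu(B)^{-1/p}\sim k^{1/p}$ from the normalization to outpace the penalty $k^r$ incurred by dividing by $n^r$ at the top of the tower. This is precisely where the hypothesis $p<1/r$ enters, and it is the reason the construction of \propref{conze2} (which corresponds to $p=1$ and uses the weights $n^{-2}$ to combine infinitely many towers) must be replaced here by a single tower with sharp $L^p$-normalization. Once this is in place the rest of the argument is formal: continuity in measure of $T_n$ is automatic, and \thmref{LJB} delivers the dense $G_\delta$ set ${\Cal F}$.
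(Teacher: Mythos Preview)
Your proof is correct and follows the same overall strategy as the paper: both apply the del Junco--Rosenblatt criterion (\thmref{LJB}) to the operators $T_n=T^n/n^r$, and both verify the hypothesis via a Rokhlin tower. The difference is in how the witness function $h$ is built. The paper mimics \propref{conze2} directly, taking the infinite sum $h=\sum_{n\ge1} n^{-2}\mu(B_n)^{-1/p}1_{B_n}$ (with $k_n=2^n$), bounding $\|h\|_p$ by Minkowski, and then normalizing; this single $h$ works for every $K$ because $k_n^{1/p-r}/n^2\to\infty$. You instead construct, for each fixed pair $(K,\eps)$, a fresh function $h=\mu(B)^{-1/p}1_B$ supported on a single tower of height $k$, and let $k\to\infty$ to beat $K$ via $k^{1/p-r}\to\infty$. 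Your version is slightly cleaner for the purpose at hand, since \thmref{LJB} only asks for \emph{some} $h$ for each $(K,\eps)$ and there is no need to package all towers into one function. Your closing remark that the infinite-sum construction ``must be replaced'' is a little strong: the paper shows it need not be, only the normalization $\mu(B_n)^{-1}\mapsto\mu(B_n)^{-1/p}$ is required.
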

\begin{proof} We will apply Theorem \ref{LJB}. For $f \in L^p$, put $T_nf=\frac{T^nf}{n^r}$. 
Then $T_n$ is continous on $L^p$, and therefore continuous in measure.
Fix $\epsilon \in (0,1)$ and $K>0$, We use Rokhlin's lemma as in the proof of Proposition 
\ref{conze2}, and define $h$ as follows: 
$$
h := \sum_{n=1}^\infty n^{-2} \frac{1}{\mu(B_n)^{1/p}} 1_{B_n}.
$$ 
Then $0\le h \in L^p(\mu)$ with $\|h\|_p \le \sum_{n=1}^\infty  n^{-2}$. 
As in the proof of Proposition \ref{conze2}, there is $n_K \in \N$ such that for any $n >n_K$, 
we have
$$
\mu\Big\{x ~~:~~\sup_{n \geq 1}|T_nh|>K\Big\} >1-\epsilon.
$$
Let $h'=h/\|h\|_p$, and replace $K$ in the previous inequality by $K\|h\|_p$. Then $\|h'\|_p=1$, and
$$
\mu\Big\{x ~~:~~\sup_{n \geq 1}|T_nh'|>K\Big\} >1-\epsilon.
$$
We then deduce from Theorem \ref{LJB} that there is a dense $G_\delta$ subset
${\Cal F}\subset  L^p(X,\mu)$ such that $\sup_n \frac{|T^nh|}{n^r}=\infty$ a.e.  
for any $h \in {\Cal F}$.
\end{proof}
Proposition \ref{conze2} was inspired by its special case below, first proved by JP Conze; 
its "number theoretic" proof, which does not use Rokhlin's lemma, is of independent interest.

\begin{prop}\label{conze}
Let $T$ be induced by an irrational rotation of the unit circle. Then there 
exists $0\le h \in L^1$ such that for every $0<r<1$ we have $\limsup_n n^{-r}T^nh(x)=\infty$ a.e.
\end{prop}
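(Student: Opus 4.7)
The plan is to combine a Khinchin-type metric Diophantine approximation result with an $L^1$ function having a logarithmic singularity at $0$. The arithmetic input is a dynamical form of Khinchin's theorem: for every irrational $\alpha$ and every $c>0$, the set
$$
E_c := \bigl\{x\in \T : \|x+n\alpha\| < c/n \text{ for infinitely many } n\bigr\}
$$
has full Lebesgue measure. Indeed, the sets $A_n := \{x : \|x+n\alpha\|<c/n\}$ are intervals of length $2c/n$ (centered at $-n\alpha \bmod 1$) whose measures sum to $\infty$, and their pairwise overlaps $\mu(A_n\cap A_m)$ are close to $\mu(A_n)\mu(A_m)$ for generic $n\ne m$ (controlled via the three-distance theorem); a Chung--Erd\H{o}s / Kochen--Stone second-moment argument combined with the ergodic $0$--$1$ law then gives $\mu(\limsup A_n)=1$.

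Viewing $\T$ as $(-1/2, 1/2]$, define $h(y) := 1/(|y|\log^2(1/|y|))$ for $y\ne 0$ and $h(0):=0$. Then $h\geq 0$, and with $u=\log(1/|y|)$,
$$
\int_{\T}h\,d\mu = 2\int_{0}^{1/2}\frac{dy}{y\log^2(1/y)} = 2\int_{\log 2}^{\infty}\frac{du}{u^{2}} = \frac{2}{\log 2} < \infty,
$$
so $h\in L^1(\mu)$. Moreover, an elementary computation shows that $h$ is decreasing in $|y|$ on $(0, 1/e^{2}]$.

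Fix $x\in E_1$ and $r\in(0,1)$: there is a sequence $n_k=n_k(x)\to\infty$ with $|x+n_k\alpha|\leq 1/n_k$, so for $k$ large enough that $1/n_k \leq 1/e^{2}$,
$$
T^{n_k}h(x) = h(x+n_k\alpha) \geq h(1/n_k) = \frac{n_k}{\log^2 n_k}, \quad\text{hence}\quad \frac{T^{n_k}h(x)}{n_k^{r}} \geq \frac{n_k^{\,1-r}}{\log^2 n_k} \to \infty.
$$
This gives $\limsup_n T^n h(x)/n^r = \infty$ for a.e.\ $x$, as required. The main step requiring justification is the Khinchin-type statement $\mu(E_c)=1$; I would cite it from a standard metric Diophantine approximation reference (the inhomogeneous Khinchin theorem for rotations), or prove it directly via the outlined second-moment estimate. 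Everything else---the $L^1$ bound for $h$ and the final chain of inequalities---is elementary.
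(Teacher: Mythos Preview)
Your proof is correct and takes a genuinely different, though closely related, route from the paper's. The paper builds $h$ directly out of the continued-fraction denominators $(q_n)$ of the rotation number $\alpha$, setting $h:=\sum_{n\ge1} n^{-2}q_n\,1_{[0,2/q_n]}$; its arithmetic step is that for every $x$ (outside the countable backward orbit of $0$) there is $0\le k_n(x)\le q_n$ with $x+k_n(x)\alpha\bmod 1\in[0,2/q_n]$ and $k_n(x)\to\infty$, which at once gives $k_n^{-r}T^{k_n}h(x)\ge q_n^{-r}\cdot n^{-2}q_n=n^{-2}q_n^{1-r}\to\infty$ since $q_n$ grows at least exponentially. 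You instead fix a \emph{universal} $h(y)=1/\bigl(|y|\log^2(1/|y|)\bigr)$, independent of $\alpha$, and isolate the arithmetic into the full-measure set $E_c$. Two remarks are worth making. First, the Diophantine input you need is exactly the fact the paper establishes via continued fractions: the paper's argument shows $E_2$ is co-countable (not merely of full measure), so the second-moment Borel--Cantelli machinery you outline---which is more delicate, especially for Liouville $\alpha$---is not required, and a direct appeal to the three-distance theorem or Minkowski's inhomogeneous approximation theorem is both simpler and sharper. Second, what you gain over the paper is a single $h$ that works simultaneously for every irrational rotation; what the paper gains is a fully self-contained construction that does not need to name the inhomogeneous approximation statement as a separate lemma.
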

\begin{proof}
We represent the transformation by $\theta x=x+\alpha \mod 1$, with $\alpha$ irrational. Let
$(q_n)$ be the sequence of denominators in the continued fractions representation of $\alpha$,
and define 
$$h:=\sum_{n=1}^\infty n^{-2}q_n 1_{[0,2/q_n]}.$$
 Then $\int_0^1 h\,dx =2\sum_{n \ge 1} n^{-2} <\infty$, so $h$ is integrable.

For $x \in[0,1)$ there exists $k_n(x)$, $0\le k_n(x)\le q_n$, such that 
$x+k_n(x)\alpha \mod 1 \in [0,2/q_n]$. The sequence $k_n(x)$ converges to $\infty$, and for $0<r<1$
we have
$$
\frac1{k_n(x)^r} h(\theta^{k_n(x)}x) \ge \frac1{q_n^r} h(\theta^{k_n(x)}x) \ge n^{-2}q_n^{1-r}
\to \infty,
$$
since $q_n$ increases to infinity at least exponentially \cite[Theorem 12, p. 13]{Kh}. 

\noindent Hence $\limsup_n  n^{-r}T^nh(x)=\infty$ a.e.
\end{proof}

\begin{prop} \label{rho-powers}
Let $T$ be induced by a probability preserving transformation on $(X,\Sigma, \mu)$ and fix $0<r<1$.
Given $\rho>1$, put $n_j:=[\rho^j]$. Then for $f \in (I-T)L^r(\mu)$ we have $\lim_j M_{n_j}f=0$
a.e.
\end{prop}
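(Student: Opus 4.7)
The plan is to reduce the claim, via the definition of $(I-T)L^r$, to an a.e. decay statement for the iterates of a single function in $L^r$ along the subsequence $n_j=[\rho^j]$, and then handle that by a straightforward Borel--Cantelli argument using $|g|^r \in L^1$ and the exponential growth of $n_j$.

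First, I would write $f=g-Tg$ with $g\in L^r(\mu)$. A telescoping computation gives
\[
M_nf=\frac{1}{n}\sum_{k=0}^{n-1}T^k(g-Tg)=\frac{g-T^ng}{n}.
\]
Since $g\in L^r(\mu)$ is a.e. finite, $g/n_j\to 0$ a.e.\ trivially, so it is enough to prove that $T^{n_j}g/n_j \to 0$ a.e.

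Next I would exploit the measure-preserving property of $\theta$: for every $t>0$,
\[
\mu\{|T^{n_j}g|>t\}=\mu\{|g\circ\theta^{n_j}|>t\}=\mu\{|g|>t\}.
\]
Fix $\varepsilon>0$. Applying Markov's inequality to the $L^1$ function $|g|^r$ at level $\varepsilon^r n_j^r$ gives
\[
\mu\{|T^{n_j}g|>\varepsilon n_j\}=\mu\{|g|^r>\varepsilon^r n_j^r\}\le \frac{1}{\varepsilon^r n_j^r}\int|g|^r\,d\mu.
\]
Since $n_j=[\rho^j]\ge \rho^j/2$ for all large $j$ (because $\rho>1$), we have $\sum_j n_j^{-r}\le 2^r\sum_j \rho^{-jr}<\infty$. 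Thus $\sum_j \mu\{|T^{n_j}g|>\varepsilon n_j\}<\infty$, and by Borel--Cantelli,
\[
\limsup_{j\to\infty}\frac{|T^{n_j}g(x)|}{n_j}\le \varepsilon \quad\text{for a.e. }x.
\]
Letting $\varepsilon$ run through a sequence tending to $0$ yields $T^{n_j}g/n_j\to 0$ a.e., hence $M_{n_j}f\to 0$ a.e.

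There is no real obstacle here; the only point to watch is that the subsequence grows geometrically so that the series $\sum n_j^{-r}$ converges even for $r$ arbitrarily close to $0$, which is exactly why thinning to $n_j=[\rho^j]$ is imposed. Note also that the argument uses only $g\in L^r$ and measure preservation; no ergodicity, invertibility, or aperiodicity is needed.
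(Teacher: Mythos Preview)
Your proof is correct and follows essentially the same route as the paper: write $f=(I-T)g$, telescope to $M_{n_j}f=n_j^{-1}(g-T^{n_j}g)$, and show $T^{n_j}g/n_j\to 0$ a.e.\ via the summability of $\sum_j n_j^{-r}$ coming from the geometric growth of $n_j$. The only cosmetic difference is that the paper concludes by showing $\sum_j T^{n_j}|g|/n_j\in L^r$ (hence finite a.e., hence terms $\to 0$), while you use Markov's inequality plus Borel--Cantelli; these are equivalent packagings of the same estimate.
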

\begin{proof}
We show that if $g \in L^r$, then $n_j^{-1}T^{n_j}g \to 0$ a.e. Since $n_j+1 >\rho^j$, we have
$$
\int\Big|\frac{T^{n_j}g}{n_j}\Big|^r d\mu =
\Big(\frac{n_j+1}{n_j}\Big)^r \Big(\frac1{n_j+1}\Big)^r\int|g|^rd\mu \le 2^r (\rho^{-r})^j \int|g|^rd\mu.
$$
By the triangle inequality in $L^r$, we have
$$
\int \Big|\sum_{j=1}^\infty  \frac{T^{n_j}|g|}{n_j}\Big|^rd\mu \le
\sum_{j=1}^\infty \int \Big( \frac{T^{n_j}|g|}{n_j}\Big)^rd\mu \le 
2^r\int |g|^rd\mu \sum_{j=1}^\infty (\rho^{-r})^j < \infty.
$$
Hence the series $\displaystyle \sum_{j=1}^\infty  \frac{T^{n_j}|g|}{n_j}$ converges a.e., so 
 $ \displaystyle \frac{|T^{n_j}g|}{n_j}=\frac{T^{n_j}|g|}{n_j} \to 0$ a.e.

When $f =(I-T)g$ we have $\displaystyle  M_{n_j}f =n_j^{-1}(g-T^{n_j}g) \to 0$ a.e.
\end{proof}
\bigskip

\section{Ergodic theorems in the Hardy spaces $H^r(\mathbb T)$, $0<r<1$}

In this section,  the mean ergodic theorem for rotations is extended  to $H^r(\mathbb T)$, $0<r<1$. 
\smallskip
 
The Hardy space $H^r=H^r(\mathbb D)$, $r>0$, is the linear space of all holomorphic functions $f$ 
in the open unit disc $\mathbb D$ such that
$$\sup_{R <1}\frac{1}{2\pi}\int_{0}^{2\pi}|f(Re^{i t})|^r dt<+\infty.$$ 
Since the left hand-side in the above is increasing with $R$ \cite[Theorem 1.5]{Duren}, we have 
the limit
$$
\|f\|_{H^r}^r:=\lim_{R \to 1^-}\frac{1}{2\pi}\int_{0}^{2\pi}|f(Re^{i t})|^r dt<+\infty.
$$ 
When $f \in H^r$, it has radial limits $f(e^{it}):= \lim_{R \to 1^-} f(R e^{it})$ for a.e. 
$t \in[0,2\pi)$ (e.g. \cite[Theorem VII.7.25]{Zyg}). By Fatou's lemma 
$\frac1{2\pi}\int_0^{2\pi} |f(e^{it})|^r dt \le \|f\|_{H^r}^r$, so $f(e^{it}) \in L^r(\mathbb T)$. 
We denote by $H^r(\mathbb T)$ the set of all these radial limits.
In fact, $\frac1{2\pi}\int_0^{2\pi} |f(e^{it})|^r dt=\|f\|_{H^r}^r$ \cite[Theorem VII.7.32]{Zyg}, 
\cite[Theorem 2.6(i)]{Duren}, and where there is no risk of confusion we write $\|f\|_r^r$ for 
both sides of the latter equality.  Moreover \cite[Theorem 2.6]{Duren}, $f \in H^r$ satisfies
\begin{equation} \label{radial}
	\lim_{R \to 1^-} \frac1{2\pi} \int_0^{2\pi} \big| f(Re^{it}) - f(e^{it})\big| ^r dt = 0.
\end{equation}

For $R<1$, we denote by $\overline{\mathbb C_R}$ the circle of radius $R$.

For nice accounts on Hardy spaces, we refer to \cite[Chap. VII, Vol I, pp. 271-287]{Zyg},
\cite[Chap. III]{Katz}, \cite[Chap. 1]{Duren}. \\

It was observed in \cite[Chapter  VII, p. 284]{Zyg} that $H^r(\mathbb T)$ can be defined as 
the closure, in the $L^r$ metric, of the analytic trigonometric polynomials, and when 
$f(z)= \sum_{k=0}^{\infty} a_kz^k$ is in $H^r(\mathbb D)$, we have convergence of
 $\sum_{k=0}^n a_ke^{ikt}$ to $f(e^{it})$ in $L^r(\mathbb T)$ metric. Moreover, 
 $H^r(\mathbb T)$ is complete, so is a closed subspace of $L^r(\mathbb T)$.
 If $\theta$ is an irrational rotation of the circle with induced operator $Tf =f\circ \theta$,
then $e^{ikt}$ is an eigenfunction of $T$ for any $k \in \mathbb Z$, hence $H^r(\mathbb T)$
 is invariant under $T$.\\

In contrast to $L^r(\mathbb T)$, here we can take advantage of the theorem of Duren-Romberg-Shields 
(cf. \cite[Theorem 7.5, p.115]{Duren}), which describes the dual space of $H^r$. According to that 
theorem, the dual space of $H^r$ can be identified with a certain space of Lipshitz functions 
$\Lambda_\alpha$ for a certain $\alpha>0.$ \\

We prove the following results.
\begin{thm}\label{Key}Fix $0<r<1$ and let $T$ be induced by an ergodic irrational rotation 
$\theta$ of the circle. Then,
	$$H^r(\mathbb T) =\mathcal{C}\oplus\overline{(I-T)H^r(\mathbb T)},$$
	where $\mathcal{C}$ is the subspace of constant functions.
\end{thm}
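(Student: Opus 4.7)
The plan is to construct a continuous linear projection $P\colon H^r(\mathbb T)\to\mathcal{C}$ whose kernel is exactly $\overline{(I-T)H^r(\mathbb T)}$. The natural choice is $Pf:=F(0)\cdot\mathbf{1}$, where $F\in H^r(\mathbb D)$ is the analytic extension of $f$ (equivalently, $F(0)$ is the $0$-th Taylor coefficient of $f$). Once $P$ is shown to be continuous and to annihilate $(I-T)H^r$, the decomposition $f=Pf+(f-Pf)$, combined with the density of analytic trigonometric polynomials in $H^r$ recalled at the start of the section, will yield the theorem.

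The main obstacle is continuity of $P$ in the $L^r$-metric; this is precisely where $H^r$ departs from $L^r(\mathbb T)$, whose topological dual is trivial. The key tool is subharmonicity of $|F|^r$ for $F$ holomorphic on $\mathbb D$: the sub-mean value property gives
$$
|F(0)|^r \le \frac{1}{2\pi}\int_0^{2\pi}|F(Re^{it})|^r\,dt
$$
for every $R<1$, and letting $R\to 1^-$ yields $|Pf|^r\le \|f\|_r^r$. Hence $P$ is Lipschitz for the invariant metric $d$, with image in the $T$-invariant subspace $\mathcal{C}$. (Alternatively, $P$ can be recognised as a specific Lipschitz element of the Duren--Romberg--Shields dual of $H^r$ recalled above, but the sub-mean inequality is more elementary.)

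That $P$ annihilates $(I-T)H^r$ is immediate on the Fourier side: $T$ multiplies the $k$-th Taylor coefficient by $e^{ik\alpha}$, so $(I-T)g$ has vanishing constant coefficient for every $g\in H^r$, giving $P(g-Tg)=0$. By continuity, $P\equiv 0$ persists on $\overline{(I-T)H^r}$, forcing directness of the sum: if $c\cdot\mathbf{1}\in\overline{(I-T)H^r}$, then $c=P(c\cdot\mathbf{1})=0$, so $\mathcal{C}\cap\overline{(I-T)H^r}=\{0\}$.

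It remains to place $f-Pf$ in $\overline{(I-T)H^r}$ for arbitrary $f\in H^r$. Approximate $f$ in the $H^r$-metric by analytic trigonometric polynomials $p_m=\sum_{k=0}^{N_m}a_k^{(m)}e^{ikt}$. Continuity of $P$ yields $Pp_m\to Pf$, hence $q_m:=p_m-Pp_m\to f-Pf$. Each $q_m=\sum_{k=1}^{N_m}a_k^{(m)}e^{ikt}$ has no constant term, and irrationality of $\alpha$ makes $1-e^{ik\alpha}\neq 0$ for every $k\ge 1$, so
$$
a_k^{(m)}e^{ikt}=(I-T)\!\left(\frac{a_k^{(m)}}{1-e^{ik\alpha}}\,e^{ikt}\right)\in (I-T)H^r,
$$
placing $q_m$ in $(I-T)H^r$ and $f-Pf$ in its closure. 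Combined with the directness established above, this gives the desired decomposition.
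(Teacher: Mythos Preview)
Your proof is correct and follows essentially the same route as the paper's: both show that the functional $f\mapsto a_0$ is continuous on $H^r(\mathbb T)$ and vanishes on $(I-T)H^r$, then use polynomial approximation to place $f-a_0$ in $\overline{(I-T)H^r}$. The one notable difference is that you establish continuity of this functional directly from subharmonicity of $|F|^r$ (the sub-mean value inequality $|F(0)|^r\le\|f\|_r^r$), whereas the paper invokes the Duren--Romberg--Shields description of $(H^r)^*$; your argument is more elementary and self-contained, since it avoids importing the full Lipschitz-space duality.
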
  
\noindent We further have the following ergodic theorem.

\begin{thm}\label{Hp} Fix $0<r<1$ and let $T$ be induced by an ergodic irrational rotation
$\theta$ of the circle. Then, for each $f \in H^r(\mathbb T)$ there is a constant 
$a(f) \in \mathbb{C}$ such that 
	$$\Big\|\frac1N\sum_{k=0}^{N-1}T^k f-a(f)\Big\|_r \to 0.$$
\end{thm}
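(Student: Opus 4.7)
The plan is to reduce the theorem, via \thmref{Key}, to showing $L^r$-metric convergence $\|M_N g\|_r \to 0$ for every $g$ in the closure $\overline{(I-T)H^r(\mathbb T)}$. Concretely, by \thmref{Key} every $f \in H^r(\mathbb T)$ decomposes uniquely as $f = a(f)\mathbf{1} + g$ with $a(f) \in \mathbb C$ and $g \in \overline{(I-T)H^r(\mathbb T)}$. Since $T\mathbf{1} = \mathbf{1}$, we have $M_N f - a(f) = M_N g$, so it suffices to verify the convergence on $\overline{(I-T)H^r(\mathbb T)}$.

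For $g$ actually of the form $(I-T)h$ with $h \in H^r$, the telescoping computation from the proof of \thmref{main1} gives $M_N g = (h - T^N h)/N$, and hence
\[
\int |M_N g|^r \,d\mu \;\le\; \frac{2}{N^r}\int |h|^r\,d\mu \;\longrightarrow\; 0.
\]
For general $g \in \overline{(I-T)H^r(\mathbb T)}$, choose a sequence $h_j \in H^r$ with $g_j := (I-T)h_j \to g$ in the $L^r$-metric. The $r$-triangle inequality yields
\[
\int |M_N g|^r\,d\mu \;\le\; \int |M_N g_j|^r\,d\mu \;+\; \int |M_N(g - g_j)|^r\,d\mu;
\]
for each fixed $j$ the first term tends to $0$ as $N \to \infty$ by the step above, so the whole argument hinges on controlling the second term uniformly in $N$.

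This is the main obstacle. The naive $r$-convexity bound only yields $\int |M_N(g-g_j)|^r \,d\mu \le N^{1-r}\int|g-g_j|^r\,d\mu$, which blows up; moreover this $N^{1-r}$ estimate is sharp in the ambient $L^r(\mathbb T)$ (disjointly-supported rotates of an indicator saturate it). The crux is therefore to establish a genuinely $N$-uniform bound
\[
\int |M_N F|^r\,d\mu \;\le\; C_r \int |F|^r\,d\mu \qquad \text{for every } F \in H^r(\mathbb T), \; N \ge 1,
\]
exploiting the analytic structure absent in $L^r$. The natural route is to view $M_N$ as the convolution operator associated to the probability measure $\mu_N := \frac1N\sum_{n=0}^{N-1}\delta_{2\pi n\alpha}$ on $\mathbb T$, equivalently as a Fourier multiplier $\hat{f}(k) \mapsto c_{k,N}\hat{f}(k)$ with $|c_{k,N}| \le 1$ acting on Taylor-supported sequences; such multipliers act boundedly on $H^r$ (uniformly in the multiplier sequence bound), a fact that can be derived via the atomic decomposition of $H^r$ or through the Duren--Romberg--Shields duality $(H^r)^* = \Lambda_\alpha$ invoked earlier in the section.

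Granting this uniform bound, the argument closes in the standard way: fix $\epsilon>0$, choose $j$ so large that $\|g - g_j\|_r^r < \epsilon/(2C_r)$ (so the second summand is bounded by $\epsilon/2$ for all $N$), then choose $N_0$ so that $\|M_N g_j\|_r^r < \epsilon/2$ for $N \ge N_0$. This gives $\|M_N g\|_r^r < \epsilon$ eventually, proving $\|M_N g\|_r \to 0$ and hence $\|M_N f - a(f)\|_r \to 0$. The entire difficulty of the theorem is concentrated in the analytic multiplier/convolution bound on $H^r$; everything else is a mechanical combination of \thmref{Key} with the coboundary estimate from \thmref{main1}.
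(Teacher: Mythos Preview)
Your reduction via Theorem~\ref{Key} is sound, and you correctly isolate the crux: the uniform bound $\int|M_N F|^r\,d\mu \le C_r\int|F|^r\,d\mu$ for $F\in H^r(\mathbb T)$, $N\ge 1$. Neither of your proposed justifications establishes it. The multiplier claim is false as stated: it is \emph{not} the case that every sequence with $|c_k|\le 1$ defines a uniformly bounded operator on $H^r$ for $0<r<1$ (random $\pm1$ sequences already fail on $H^1$; multipliers on $H^r$ for $r<1$ require genuine smoothness, not mere $\ell^\infty$ control). The duality route is blocked for a structural reason: $T$ is indeed an isometry of the Banach space $\Lambda_\alpha\cong(H^r)^*$, so $M_N$ is a contraction there, but since $H^r$ is not locally convex the quasinorm $\|\cdot\|_{H^r}$ cannot be recovered from the dual --- testing against $(H^r)^*$ only yields the norm in the Banach envelope of $H^r$, not $\|\cdot\|_{H^r}$ itself. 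Uniform boundedness of the adjoints therefore does not transfer back. (The bound you want \emph{is} true a posteriori, via Banach--Steinhaus for $F$-spaces once the theorem is proved, but that is circular here.) This is precisely the obstacle the paper flags just before the proof: the Hahn--Banach machinery underpinning the Banach-space mean ergodic theorem is unavailable, so the decomposition of Theorem~\ref{Key} does not by itself yield Theorem~\ref{Hp}.

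The paper's proof circumvents the operator-norm question entirely. It does not invoke Theorem~\ref{Key}; instead it approximates $f$ by its Taylor polynomials $P_K$ and replaces the missing $H^r$-bound by the pointwise estimate of Lemma~\ref{MSO}, namely $|g(z)|\le \|g\|_r(1-R)^{-1/r}$ for $|z|\le R<1$. Applied to $g=f-P_K$ (together with a Montel compactness step), this gives uniform control of $|M_N(f-P_K)(Re^{it})|$ on each circle $|z|=R<1$; combined with the $L^2$ ergodic theorem for the polynomial $P_K$, one obtains the required $L^r$-estimate on $|z|=R$, which is exactly what $\|\cdot\|_{H^r}$ asks for. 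The key idea is that working strictly inside the disc converts the intractable $H^r$-operator-norm question into an elementary sup-norm estimate.
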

Classically, the mean ergodic theorem for power-bounded operators in Banach spaces is equivalent to
 the ergodic decomposition. But, here, since the Hahn-Banach separation theorem fails, we need to
 give separate proofs for  Theorem \ref{Hp} and Theorem \ref{Key}. 
\begin{proof}[\textit{Proof of Theorem \ref{Key}.}] By Lemma \ref{limit} and the proof of Theorem \ref{main1}, we proceed first by proving that $\mathbf{1} \not \in \overline{(I-T)H^r}$. Assume, by contradiction, that $\mathbf{1} \in \overline{(I-T)H^r}$. Then, there is a sequence of functions $(g_n) \subset H^r$, such that
 $g_n- Tg_n  \to \mathbf{1}$  in $L^r$ metric.

By the Duren-Romberg-Shields's theorem,  elements $\psi$ in the dual of $H^r$ can be identified
with certain analytic Lipschitz functions $g$ by the relation 
	$$\psi(f)=\lim_{R \to 1}\frac{1}{2\pi}\int_{0}^{2\pi}f(Re^{it})g(e^{-it}) dt.$$
Since $g \equiv 1$ is in that class,  the linear form  $\phi$ on $H^r$ given by 
	$$\phi(f)=\lim_{R \to 1} \frac{1}{2\pi}\int_{0}^{2\pi}f(Re^{it}) dt,$$
	is an element of the dual space, i.e. well-defined and continuous \cite[p. 115]{Duren}. Therefore,
	$$\phi(g_n-Tg_n)\to \phi(\mathbf{1})=\mathbf{1}.$$
	But, for each $n \geq 1$,
	\begin{align*}
	\phi(g_n-Tg_n)&=\lim_{R \to 1} \Big(\frac{1}{2\pi}
	\int_{0}^{2\pi}	g_n(Re^{it})-g_n(Re^{i\theta(t)})dt\Big)\\
	&=\lim_{R \to 1}\Big( \frac{1}{2\pi}
	\int_{0}^{2\pi} g_n(Re^{it}) dt- \frac{1}{2\pi} \int_{0}^{2\pi} g_n(Re^{i\theta(t)})dt\Big)\\
	&=\lim_{R \to 1}\Big( \frac{1}{2\pi}
	\int_{0}^{2\pi} g_n(Re^{it}) dt- \frac{1}{2\pi} \int_{0}^{2\pi} g_n(Re^{it})dt\Big) =0,
	\end{align*} 
The last equality since $\theta$ is a measure preserving transformation.  Whence
 $$\phi(g_n-Tg_n) \to 0.$$  
This leads to a contradiction, so $\mathbf{1} \not \in \overline{(I-T)H^r(\mathbb T)}$. 
\smallskip

	Now, take $f \in H^r(\mathbb T)$ and write, in the $L^r$  metric,
	$$f(e^{it})=\sum_{k=0}^{+\infty}a_ke^{ikt}.$$
	For each $K \in \mathbb{N}^*$ we have $P_K(e^{it}):=\sum_{k=1}^{K}a_ke^{ikt} \in 
{(I-T)H^r(\mathbb T)}$, since for each $0\ne k \in \mathbb{Z}$, the function $t \mapsto e^{ikt}$ is an
 eigenfunction with unimodular eigenvalue $\ne 1$.
	Moreover, by von Neumann's ergodic theorem, we have
	$$\lim_{N \to +\infty}\frac{1}{2\pi}\int_{0}^{2\pi} \Big|\frac{1}{N}\sum_{n=1}^{N-1}P_K(e^{i\theta^n(t)})\Big|^r dt \leq 
	\lim_{N \to +\infty}\Big\|\frac{1}{N}\sum_{n=1}^{N-1}P_K(e^{i\theta^n(t)})\Big\|_2^r=0.$$
	We thus get $f-a_0=L^r- \lim_K P_K \in \overline{(I-T)H^r(\mathbb T)}.$ 
Hence $H^r(\mathbb T)=\mathcal{C}+\overline{(I-T)H^r(\mathbb T)}$; the sum is a direct sum, 
since we have proved that $\mathcal{C}\cap\overline{(I-T)H^r(\mathbb T)}=\{0\}$.  
\end{proof}

{\bf Remark.} In Proposition \ref{example} we proved for $T$ induced by an irrational rotation
 that $\mathbf{1} \in \overline{(I-T)L^r(\mathbb T)}$ when $0<r<\frac13$; 
the proof of Theorem \ref{Key}  shows $\mathbf{1} \not\in \overline{(I-T)H^r(\mathbb T)}$.
\medskip

For the the proof of Theorem \ref{Hp}, we need the following lemma; for its proof we refer to 
\cite[Chap. VII, Vol I pp. 284/5]{Zyg}.
\begin{lem}\label{MSO} 
Let $r \in (0,1)$ and $f \in H^r$ such that $\big\|f \big\|_r \leq M$, for some $M>0.$ Then,  
for $R<1$ and for 
	$|z| \leq R,$ we have
	$$\big|f(z)\big| \leq \frac{M}{\big(1-R\big)^{\frac{1}{r}}}.$$ 
\end{lem}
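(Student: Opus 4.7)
My plan is to reduce the statement to the $H^2$ case via the canonical Blaschke factorization, and then apply an elementary Cauchy--Schwarz argument on power series.

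First, I would invoke the F.~Riesz factorization theorem for $H^r$ (valid for every $0<r<\infty$): the zeros of $f$ in $\mathbb D$ form a Blaschke sequence, and writing $B$ for the corresponding Blaschke product, one can factor $f = B\cdot g$ with $g\in H^r$ nowhere vanishing on $\mathbb D$. Since $|B|\le 1$ on $\mathbb D$ and $|B(e^{it})|=1$ almost everywhere, this factorization is norm-preserving on the boundary, so $\|g\|_r = \|f\|_r \le M$ and $|f(z)|\le |g(z)|$ throughout $\mathbb D$.

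Second, because $g$ does not vanish on the simply connected domain $\mathbb D$, a single-valued holomorphic branch of $F:=g^{r/2}$ is well defined, and using the boundary values together with the fact that $|B|=1$ a.e.\ on $\mathbb T$,
$$\|F\|_2^2 = \frac{1}{2\pi}\int_0^{2\pi}|g(e^{it})|^{r}\,dt = \|g\|_r^r = \|f\|_r^r \le M^r,$$
so $F\in H^2$. For any $H^2$ function $F(z)=\sum_{k\ge 0}a_k z^k$, Parseval's identity $\|F\|_2^2=\sum_k|a_k|^2$ combined with Cauchy--Schwarz yields
$$|F(z)| \le \Bigl(\sum_k |a_k|^2\Bigr)^{1/2}\Bigl(\sum_k |z|^{2k}\Bigr)^{1/2} = \frac{\|F\|_2}{\sqrt{1-|z|^2}}.$$
Applying this to $F=g^{r/2}$, raising to the power $2/r$, and using $|f|\le |g|$ gives
$$|f(z)| \le |g(z)| \le \frac{\|F\|_2^{2/r}}{(1-|z|^2)^{1/r}} \le \frac{M}{(1-|z|^2)^{1/r}} \le \frac{M}{(1-R)^{1/r}}$$
for $|z|\le R$, since $1-R^2\ge 1-R$. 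This is the desired inequality.

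The only nontrivial ingredient, and therefore the main obstacle, is justifying the Blaschke factorization in $H^r$ for $0<r<1$ together with the identity $\|g^{r/2}\|_2^2 = \|g\|_r^r$. Once this classical piece of $H^p$ theory is granted (cf.\ the cited pages of Zygmund, or Duren, Chap.~2), everything else is essentially the Parseval--Cauchy--Schwarz bound in $H^2$, which is elementary. I note that one could also try a direct subharmonicity argument for $|f|^r$, but the naive mean-value bound over a disc of radius $1-R$ only yields the exponent $2/r$ rather than the sharp $1/r$; the factorization trick above is what recovers the correct exponent.
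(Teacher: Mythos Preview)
Your argument is correct. The paper does not supply its own proof of this lemma but refers the reader to Zygmund \cite[Chap.~VII, Vol.~I, pp.~284--285]{Zyg}; the Riesz--Blaschke factorization followed by the $H^2$ Cauchy--Schwarz estimate that you carry out is precisely the classical argument found there (and in Duren, Chap.~2), so your proof matches the intended one.
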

We proceed now to the proof of Theorem \ref{Hp}. 
\begin{proof}[\textit{Proof of Theorem \ref{Hp}.}] Let $f(z)=\sum_{k=0}^{+\infty}a_kz^k$ and $P_K(z)=\sum_{k=0}^{K}a_kz^k.$ Then, $\Big\|f-P_K\Big\|_r \to 0$ as $K \to \infty$. 
 Fix $\epsilon>0$ and $K_0$ such that for each $K >K_0$, we have
	$$  \Big\|f-P_K\Big\|_r  <\epsilon.$$
	Take $R<1$ sufficiently close to 1. Then, by Lemma \ref{MSO}, we have	
$$ 
\big|f(z)-P_K(z)\big|<\frac{\epsilon}{\big(1-R\big)^{\frac{1}{r}}} \quad \rm{for }\quad  |z|\leq R.
$$
	Therefore, the sequence $\Big(P_K(z)-f(z)\Big)_{K\ge 1}$ is uniformly bounded in each 
        compact subset of $|z| \le R<1$. We thus apply Montel-Stieltjes-Osgood theorem to 
	extract a subsequence $K_j$ such that $P_{K_j}(z)-f(z)$ converges uniformly to zero
	in each compact subset of $|z| \leq R<1$. Whence, there is 
$j_0 \in \mathbb{N}$ such that for each $j \geq j_0$, for any $t \in [0,2\pi)$ we have
	\begin{align*}
	\Big|P_{K_j}\big(Re^{i \theta^n(t)}\big)-f\big(Re^{i \theta^n(t)}\big)\Big|<\epsilon
	\end{align*}
	Hence, for any $N \geq 1,$ and for any $t \in [0,2\pi)$
	\begin{align}\label{eq:U1}
	\Big|\frac{1}{N}\sum_{n=0}^{N-1}P_{K_j}\big(Re^{i \theta^n(t)}\big)-
	\frac{1}{N}\sum_{n=0}^{N-1}f\big(Re^{i \theta^n(t)}\big)\Big|<\epsilon
	\end{align}
	But, by von Neumann's mean ergodic theorem, we have
	\begin{align}\label{erg:U2}
	\lim_{N \to +\infty}\Big\|\frac{1}{N}\sum_{n=0}^{N-1}P_{K_j}\big(e^{i \theta^n(t)}\big)-a_0\Big\|_2=0.
	\end{align}
	Therefore, there is $N_0 \in \mathbb{N}$, such that  for any $N >N_0$, we have
	\begin{align}\label{erg:U3}
	\int \Big|\frac{1}{N}\sum_{n=0}^{N-1}P_{K_j}\big(Re^{i \theta^n(t)}\big)-a_0\Big|^r dt&< 
	\int \Big|\frac{1}{N}\sum_{n=0}^{N-1}P_{K_j}\big(e^{i \theta^n(t)}\big)-a_0\Big|^r dt\\
	&< \Big\|\frac{1}{N}\sum_{n=0}^{N-1}P_{K_j}\big(e^{i \theta^n(t)}\big)-a_0\Big\|_2^r <\epsilon.
	\end{align}
	Combining \eqref{eq:U1} and \eqref{erg:U3} with  the triangle inequality, 
	we get, for any $N \geq N_0$, for each $R<1$,
	\begin{align}
	&	\int \Big|\frac{1}{N}\sum_{n=0}^{N-1}f\big(Re^{i \theta^n(t)}\big)-a_0\Big|^r dt \nonumber\\ 
	&\leq \int \Big|\frac{1}{N}\sum_{n=0}^{N-1}f\big(Re^{i \theta^n(t)}\big)-\frac{1}{N}\sum_{n=0}^{N-1}P_{K_j}\big(e^{i \theta^n(t)}\big)\Big|^r dt+   \int \Big| \frac{1}{N}\sum_{n=0}^{N-1}P_{K_j}\big(e^{i \theta^n(t)}\big)-a_0 \Big|^r dt \label{eq:U4}\\
	&\leq \epsilon+\epsilon=2\epsilon.\nonumber
	\end{align}
	Whence, for any $N \geq N_0$,
	$$\int \Big|\frac{1}{N}\sum_{n=0}^{N-1}f\big(Re^{i \theta^n(t)}\big)-a_0\Big|^r dt <2\epsilon.$$
	This finishes the proof of the theorem.
\end{proof}

The following corollary is in contrast to Propositions \ref{example} and \ref{kv}.
\begin{cor} \label{MET}
 Fix $0<r<1$. Let $T$ be induced by an ergodic irrational rotation $\theta$ of the circle, 
and let $f \in H^r(\mathbb T)$. Then $f \in \overline{(I-T)H^r(\mathbb T)}$ if and only if
$$
	\int_0^{2\pi} \Big|\frac1n\sum_{k=0}^{n-1}(T^k f)(e^{it})\Big|^rdt \to 0.
$$
\end{cor}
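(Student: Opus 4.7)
\textit{Plan.} The corollary packages together Theorems \ref{Hp} and \ref{Key} with the short argument from the second half of Theorem \ref{main1}. The reverse direction is immediate: if $\int|M_n f|^r\,dt\to 0$, set
$$
g_n \egdef f - M_n f = \frac{1}{n}\sum_{k=1}^{n-1}(I-T)\sum_{j=0}^{k-1} T^j f.
$$
Since $H^r(\mathbb T)$ is $T$-invariant, each inner sum lies in $H^r(\mathbb T)$, and therefore $g_n \in (I-T)H^r(\mathbb T)$. By hypothesis $\int |f-g_n|^r\,dt = \int|M_n f|^r\,dt \to 0$, so $f \in \overline{(I-T)H^r(\mathbb T)}$. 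This is the proof of Theorem \ref{main1} repeated inside the $T$-invariant subspace.

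\textit{Forward direction.} I would exploit the continuous linear functional $\phi$ already constructed in the proof of Theorem \ref{Key}, namely $\phi(g) = \lim_{R \to 1}\frac{1}{2\pi}\int_0^{2\pi} g(Re^{it})\,dt$ (the zeroth Fourier coefficient of $g$). Two facts are recorded there: $\phi$ is continuous on $H^r(\mathbb T)$ in the $L^r$ metric (Duren--Romberg--Shields applied to the constant Lipschitz symbol $g\equiv 1$), and $\phi(h - Th) = 0$ for every $h \in H^r$. By continuity, $\phi$ vanishes on all of $\overline{(I-T)H^r(\mathbb T)}$. Moreover $\phi$ is $T$-invariant: the substitution $t\mapsto\theta(t)$ in its defining integral yields $\phi(Tg)=\phi(g)$, hence $\phi(M_n g)=\phi(g)$ for every $n$ and every $g\in H^r(\mathbb T)$.

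Now suppose $f \in \overline{(I-T)H^r(\mathbb T)}$. Theorem \ref{Hp} provides a constant $a(f) \in \mathbb C$ with $M_n f \to a(f)\cdot \mathbf 1$ in the $L^r$ metric. Passing to the limit in $\phi(M_n f)=\phi(f)$ and using continuity of $\phi$ together with $\phi(\mathbf 1)=1$ gives
$$
a(f) = \phi\bigl(a(f)\mathbf 1\bigr) = \lim_n \phi(M_n f) = \phi(f) = 0,
$$
so $\int|M_n f|^r\,dt \to 0$. No real obstacle arises, since the analytic content is already in Theorems \ref{Hp} and \ref{Key}; the point of the present argument is merely to observe that the functional $\phi$ detects the constant $a(f)$ predicted by Theorem \ref{Hp}, so membership of $f$ in $\overline{(I-T)H^r(\mathbb T)}$ forces that constant to vanish.
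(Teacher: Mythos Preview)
Your proof is correct, and the reverse direction coincides verbatim with the paper's. For the forward direction the paper argues slightly more directly: since $\overline{(I-T)H^r(\mathbb T)}$ is $T$-invariant and closed, the averages $M_nf$ remain in it and hence so does their $L^r$-limit $a_0$; the direct sum in Theorem \ref{Key} then forces $a_0=0$. Your route through the functional $\phi$ (its continuity and $T$-invariance) is the dual version of the same idea---$\phi$ is precisely the witness that the sum in Theorem \ref{Key} is direct---so the two arguments are essentially equivalent, the paper's being a line shorter.
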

\begin{proof}
Let $f \in \overline{(I-T)H^r(\mathbb T)}$. Then the averages $M_nf$ are also in 
$\overline{(I-T)H^r(\mathbb T)}$. By the proof of Theorem \ref{Hp} $M_nf \to a_0$ in $L^r$ metric,
which yields $a_0  \in  \overline{(I-T)H^r(\mathbb T)}$, and by Theorem \ref{Key} $a_0=0$.

Assume now that $M_nf \to 0$ in $L^r$ norm. Since $H^r(\mathbb T)$ is invariant under $T$,  
the functions $g_n:=f-M_nf=\frac1n\sum_{k=1}^{n-1}(I-T)\sum_{j=0}^{k-1}T^jf$, 
defined in the proof of Theorem \ref{main1}, are in $(I-T)H^r(\mathbb T)$, and $\|f-g_n\|_r \to 0$.
\end{proof} 

\begin{thm} \label{sato2}
 Fix $0<r<1$. Let $T$ be induced on $H^r(\mathbb T)$ by an ergodic irrational rotation 
 of the circle, and let $f \in H^r(\mathbb T)$. Then $f \in (I-T)H^r(\mathbb T)$ if and only if 
\begin{equation} \label{gh}
\sup_{N\ge 1}\int_0^{2\pi} \Big|\frac1N \sum_{n=1}^N \sum_{k=0}^{n-1} T^kf(e^{it})\Big|^r dt < \infty.
\end{equation}
\end{thm}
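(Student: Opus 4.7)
The plan splits along the forward and converse directions. For the forward direction, suppose $f=(I-T)g$ with $g\in H^r(\mathbb T)$. Telescoping gives $\sum_{k=0}^{n-1}T^k f = g - T^n g$, so
$$\frac{1}{N}\sum_{n=1}^{N}\sum_{k=0}^{n-1}T^k f \;=\; g - T M_N g.$$
By Theorem \ref{Hp}, $M_N g$ converges in the $L^r$-metric to the constant $a(g)$, so $\|M_N g\|_r^r$ is bounded; the $L^r$-triangle inequality together with the fact that $T$ is an $L^r$-isometry then bounds $\int|U_N f|^r\,dt$ uniformly in $N$, yielding \eqref{gh}.

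For the converse, set $M:=\sup_N\|U_N f\|_r^r<\infty$, where $U_N f:=\frac{1}{N}\sum_{n=1}^{N}\sum_{k=0}^{n-1}T^k f$. Since $H^r(\mathbb T)$ is $T$-invariant, each $U_N f$ lies in $H^r$, so we may regard it as a holomorphic function on $\mathbb D$. The idea is to produce $g\in H^r$ with $f=(I-T)g$ as a limit of the $U_N f$, via a normal-families argument on $\mathbb D$. The first step is to show that the constant coefficient $a_0$ of $f(z)=\sum_{k\ge 0}a_k z^k$ vanishes. Since the Fourier basis diagonalizes $T$, one has $U_N f(z)=\sum_k c_{N,k}\,a_k\,z^k$ with $c_{N,0}=(N+1)/2$ and, for $k\ne 0$, a direct geometric-series computation gives $c_{N,k}\to (1-e^{ik\alpha})^{-1}$. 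Evaluating at $z=0$ and applying Lemma \ref{MSO} gives $|a_0|(N+1)/2=|U_N f(0)|\le M^{1/r}$, forcing $a_0=0$.

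Next, Lemma \ref{MSO} bounds $\{U_N f\}$ uniformly on each compact subset of $\mathbb D$, so Montel's theorem yields a subsequence $U_{N_j}f$ converging uniformly on compacts to a holomorphic $G\colon\mathbb D\to\mathbb C$. Fatou's lemma at each radius $R<1$ gives $\frac{1}{2\pi}\int_0^{2\pi}|G(Re^{it})|^r\,dt\le M$, so $G\in H^r(\mathbb D)$. Matching power-series coefficients (using $c_{N_j,k}\to (1-e^{ik\alpha})^{-1}$ for $k\ne 0$ and the fact that $a_0=0$) identifies
$$G(z)=\sum_{k\ge 1}\frac{a_k}{1-e^{ik\alpha}}\,z^k, \qquad (I-T)G(z)=G(z)-G(e^{i\alpha}z)=\sum_{k\ge 1}a_k\,z^k = f(z),$$
so $f\in (I-T)H^r(\mathbb T)$.

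The main obstacle is the vanishing step $a_0=0$, which substitutes here for the invocation of Sato's corollary used in Theorem \ref{sato}: condition \eqref{gh} is too weak to feed into Sato directly, but the pointwise Cauchy-type bound from Lemma \ref{MSO} at the origin precisely defeats the linear growth $c_{N,0}\sim N/2$. Once $a_0=0$ is in hand, the rest is a standard holomorphic compactness argument (Montel, followed by Fatou), both available because $H^r$ carries the holomorphic structure that $L^r$ lacks.
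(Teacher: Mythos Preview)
Your proof is correct and follows the same overall architecture as the paper's: the forward direction is identical (telescoping plus Theorem~\ref{Hp}), and the converse extracts a limit $G\in H^r$ from $\{U_Nf\}$ via Lemma~\ref{MSO} and Montel's theorem, then verifies $(I-T)G=f$.

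Where you differ from the paper is in two sub-steps, and in both cases your route is arguably cleaner. First, you prove $a_0=0$ \emph{before} constructing $G$, by the one-line observation $U_Nf(0)=a_0(N+1)/2$ combined with the bound $|U_Nf(0)|\le M^{1/r}$ from Lemma~\ref{MSO} at the origin. The paper instead first obtains $(I-T)g=f-a_0$ and then argues by contradiction that $a_0\ne 0$ would force the integrals in \eqref{gh} to blow up like $(N+1)^r|a_0|^r$. Second, you identify $(I-T)G=f$ by matching Taylor coefficients (uniform convergence on compacta passes to coefficients, and $c_{N,k}\to(1-e^{ik\alpha})^{-1}$ for $k\ge1$), whereas the paper works in $L^r$ on each circle $|z|=R$, computing $(I-T)g_j=f-M_{N_j}(T)f$ and invoking Theorem~\ref{Hp} to handle the second term. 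Your coefficient approach exploits directly that $T$ diagonalizes in the Fourier basis; the paper's approach stays closer to the $L^r$ metric but needs an extra appeal to Theorem~\ref{Hp}. One small point to make explicit in your write-up: the identity $(I-T)G=f$ is established on $\mathbb D$; you should note that both sides lie in $H^r(\mathbb D)$, so taking radial limits gives the equality in $H^r(\mathbb T)$.
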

\begin{proof}  
Assume \eqref{gh}. 
By the construction, $f(e^{it}) \in H^r(\mathbb T)$ is a.e. the boundary limit of $f(z) \in H^r$.
By \eqref{gh}, there exists $M>0$ such that 
	$\|\frac1N \sum_{n=1}^N\sum_{k=0}^{n-1} T^kf\|_r \le M$ 
for every $N\ge 1$.  By Lemma  \ref{MSO}, for every $N \ge 1$ and $R<1$  we have
\begin{align}
	\Big|\frac1N \sum_{n=1}^N \sum_{k=0}^{n-1} T^kf(z)\Big| \le \frac{M}{(1-R)^{1/r}} \qquad 
	\forall \quad |z|  \le R. \label{eq:U5}
\end{align}

Hence the sequence $\big\{\frac1N \sum_{n=1}^N \sum_{k=0}^{n-1} T^kf(z) \big\} \subset H^r$ is
 uniformly bounded on any compact subset of $\mathbb D$. By the Montel-Stieltjes-Osgood theorem, there
 is a subsequence $\{N_j\}$ such that $g_j(z):=\frac1{N_j} \sum_{n=1}^{N_j}\sum_{k=0}^{n-1} T^kf(z)$
 converges uniformly on compact subsets of $\mathbb D$, to a function $g$ continuous on $\mathbb D$.
 Since on any triangle in $\mathbb D$, $g$ is the uniform limit of analytic functions,
$g$ is analytic in $\mathbb D$ by Morera's theorem. Clearly $g_j \in H^r$, so for $R<1$, by 
Lebesgue's theorem and \eqref{gh},
$$
\int_0^{2\pi} |g(Re^{it})|^rdt = \lim_{j\to\infty} \int_0^{2\pi} |g_j(Re^{it})|^r dt \le
\liminf_{j\to\infty} \int_0^{2\pi} |g_j(e^{it})|^rdt \le M^r.
$$
Hence $g \in H^r$, with $\|g\|_r \le \liminf_j \|g_j\|_r \le M$, so $g(e^{it}) \in H^r(\mathbb T)$. 
We show that ${(I-T)g(e^{it})=f(e^{it})}$. 
\smallskip


Since $g_j \to g$  converges uniformly on compact subsets of $\mathbb D$, it follows that for $R<1$,
$$\int_0^{2\pi} |g_j(Re^{it})-g(Re^{it})|^r dt \to_{j\to\infty} 0.$$
This with continuity of $T$ yields, with limits in $L^r(\overline {\mathbb C_R})$ metric,
$$
(I-T)g (Re^{it}) =  \lim_j (I-T)g_j(Re^{it})= 
$$
$$
\lim_j \frac1{N_j}\sum_{n=1}^{N_j} (I-T^n)f(Re^{it}) =
 f (Re^{it}) - \lim_j M_{N_j}(T)f(Re^{it}) 
$$
Let $f(z)=\sum_{k=0}^{+\infty}a_k z^k$. Then,
by \eqref{eq:U4}, $M_n(T)f \to a_0$ in $L^r(\overline {\mathbb C_R})$ metric, so, for almost all $t$, 
$$[(I-T)g](Re^{it})=f(Re^{it})-a_0.$$
Taking radial limits we obtain $(I-T)g=f-a_0$ a.e. on $\mathbb T$. \\

{\it Claim:}  $a_0=0$.
\smallskip

Assume, by contradiction, that $a_0 \neq 0$. Let $R<1$. Then, for almost all $t$, 
$$(I-T)g(Re^{it})+a_0=f(Re^{it}).$$ 
We can thus write
\begin{align*}
\int_0^{2\pi} \Big|\frac1N \sum_{n=1}^N \sum_{k=0}^{n-1} T^kf(Re^{it})\Big|^r dt
&=\int_0^{2\pi} \Big|\frac1N \sum_{n=1}^N \Big( (I-T^n)g(Re^{it})-na_0\Big)\Big|^r dt\\
&=\int_0^{2\pi} \Big|g(Re^{it})-\frac1N \sum_{n=1}^N T^ng(Re^{it})-\frac{(N+1)}{2}a_0\Big|^r dt
\end{align*}
Applying the triangle inequality, we get
$$
\frac{1}{2\pi}\int_0^{2\pi} \Big|\frac1N \sum_{n=1}^N \sum_{k=0}^{n-1} T^kf(e^{it})\Big|^r dt \ge
\frac{1}{2\pi}\int_0^{2\pi} \Big|\frac1N \sum_{n=1}^N \sum_{k=0}^{n-1} T^kf(Re^{it})\Big|^r dt \ge
$$
$$
\frac{(N+1)^r}{2^r}|a_0|^r -
\frac{1}{2\pi}\int_0^{2\pi} \Big|g(Re^{it})-\frac1N\sum_{n=1}^N T^ng(Re^{it})\Big|^r dt\ge
	$$
	$$
	\frac{(N+1)^r}{2^r}|a_0|^r-\|g\|_r^r -\| M_N(T)g\|_r^r.
	$$
By Theorem \ref{Hp}, $\{\|M_N(T)g\|_r^r\}$ is bounded; hence
$$
\liminf_N\int_0^{2\pi} \Big|\frac1N \sum_{n=1}^N \sum_{k=0}^{n-1} T^kf(e^{it})\Big|^r dt=+\infty,
$$
which contradict \eqref{gh}. Hence $a_0=0$.  We thus obtain $(I-T)g=f$ in $H^r(\mathbb T)$. 
\medskip

Assume now that $f \in (I-T)H^r(\mathbb T)$, so $f=(I-T)g$. Then
$$
\frac1{2\pi} \int_0^{2\pi} \Big|\frac1N \sum_{n=1}^N \sum_{k=0}^{n-1} T^kf(e^{it})\Big|^r dt =
\frac1{2\pi}\int_0^{2\pi} \Big|\frac1N \sum_{n=1}^N \Big( (I-T^n)g(e^{it})\Big)\Big|^r dt =
$$
$$
\frac1{2\pi}\int_0^{2\pi} \Big|g(e^{it}) -M_N(T)g(e^{it})\Big|^r dt \le \|g\|_r^r+\|M_N(T)g\|_r^r.
$$
Since $M_N(T)g$ converges in $H^r(\mathbb T)$, the last term is bounded, so \eqref{gh} holds.
\end{proof}

{\bf Remark.} When $f \in (I-T)H^r(\mathbb T)$,  computations as in the end of the proof of Theorem
 \ref{sato2} yield
\begin{align}\label{Cob}
\sup_{n\ge 1} \int_0^{2\pi} \Big|\sum_{k=0}^{n-1}T^kf(e^{it})\Big|^r dt< \infty, 
\end{align}
If we assume \eqref{Cob}, then $f \in (I-T)L^r(\mathbb T)$ by Theorem \ref{sato}, but neither \cite{Sato}
nor \cite{AO} (which also applies) yield an indication of the "location" of the function $g \in L^r$ 
satisfying $(I-T)g=f$. The assumption \eqref{Cob} yields that $M_n(T)f \to 0$ in $H^r(\mathbb T)$, 
so $a_0=0$ by Theorem \ref{Hp}, but we have not succeeded to deduce from that assumption that 
$g \in H^r(\mathbb T)$, nor that it implies \eqref{gh}. 
\smallskip

\begin{prop} \label{Hr-ConzeII}
Let $T$ be induced by an irrational rotation of the circle. Then there exists $f$ such that
$f\in (I-T)H^r(\mathbb T)$  for each $0<r<1$, and $M_n(T)f$ does not converge a.e.
\end{prop}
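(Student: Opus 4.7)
The plan is to exhibit a single function $g$ lying in $H^r(\mathbb T)$ for every $0<r<1$, whose forward orbit $T^n g$ grows faster than $n$ near the singularity of $g$, and then set $f:=(I-T)g$. The natural candidate is the Cauchy kernel
\[
g(z):=\frac{1}{1-z},\qquad z\in\mathbb D,
\]
whose boundary function satisfies $|g(e^{it})|=1/(2|\sin(t/2)|)$, so $\int_0^{2\pi}|g(e^{it})|^r\,dt<\infty$ precisely when $r<1$. The same estimate on circles $|z|=R<1$, uniform in $R$, together with \cite[Theorem 1.5]{Duren} places $g$ in $H^r(\mathbb D)$ for every $0<r<1$, so its boundary values lie in $H^r(\mathbb T)$ for every such $r$. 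Since $H^r(\mathbb T)$ is $T$-invariant, $f:=g-Tg\in (I-T)H^r(\mathbb T)$ simultaneously for all $0<r<1$.

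The remaining task is to verify
\[
\limsup_{n\to\infty}\frac{|T^n g(e^{it})|}{n}=+\infty\qquad\text{for a.e.\ }t,
\]
because then $M_n(T)f(e^{it})=\tfrac{1}{n}\bigl(g(e^{it})-T^n g(e^{it})\bigr)$ must diverge a.e., using that $g(e^{it})/n\to 0$ at every point where $g$ is finite. Writing $\theta(e^{it})=e^{i(t+2\pi\alpha)}$ with $\alpha$ irrational, one has
\[
|T^n g(e^{it})|=\frac{1}{2|\sin(t/2+\pi n\alpha)|}\ \asymp\ \frac{1}{\bigl\|\tfrac{t}{2\pi}+n\alpha\bigr\|},
\]
where $\|\cdot\|$ denotes distance to the nearest integer. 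Thus the $\limsup$ identity reduces to the Diophantine statement
\[
\liminf_{n\to\infty}n\,\bigl\|s+n\alpha\bigr\|=0\qquad\text{for a.e.\ }s\in[0,1).
\]

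This last statement follows from the inhomogeneous Khintchine theorem: for any irrational $\alpha$ and non-increasing $\psi:\mathbb{N}\to(0,\infty)$ with $\sum_n\psi(n)=\infty$, the inequality $\|n\alpha+s\|<\psi(n)$ has infinitely many solutions for a.e.\ $s$. Choosing $\psi(n):=1/(n\log n)$ yields $n\,\|n\alpha+s\|<1/\log n\to 0$ infinitely often, so the liminf vanishes a.e. Combining with the previous reduction gives $\limsup_n |T^ng(e^{it})|/n=+\infty$, and hence $\limsup_n |M_n(T)f(e^{it})|=+\infty$, for a.e.\ $t$, precluding a.e.\ convergence.

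The principal obstacle is the Diophantine input, which is supplied cleanly by Khintchine's inhomogeneous theorem; all the rest is a linearity argument. Note the contrast with \propref{conze}: there an intricate continued-fraction tower was needed to manufacture an $h$ with $T^n h/n^r$ unbounded for every $r<1$, whereas in the Hardy-space setting the Cauchy kernel already has precisely the right singularity at $z=1$ to convert the equidistribution of the irrational rotation into pointwise unboundedness of $|T^n g|/n$.
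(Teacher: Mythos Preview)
Your argument is correct and uses the same function $g(z)=1/(1-z)$ as the paper, but the Diophantine step is genuinely different. The paper proceeds via the continued-fraction denominators $(q_n)$ of $\alpha$: for every $x$ there is $0\le \ell_n(x)\le q_n$ with $x+\ell_n(x)\alpha\ ({\rm mod}\ 1)\in[0,2/q_n]$, which gives $|T^{\ell_n(x)}g|/\ell_n(x)\ge 1/2$ and hence $\limsup_n|T^ng|/n\ge 1/2$ \emph{for every} $x$; non-convergence then follows after observing $\liminf_n|T^ng|/n=0$ a.e.\ via Fatou. Your route replaces this with the divergence half of the inhomogeneous Khintchine theorem (Sz\"usz's theorem: for every $\alpha$ and non-increasing $\psi$ with $\sum\psi(n)=\infty$, one has $\|n\alpha+s\|<\psi(n)$ infinitely often for a.e.\ $s$), applied with $\psi(n)=1/(n\log n)$, to obtain the stronger conclusion $\limsup_n|T^ng|/n=+\infty$ a.e. The trade-off: the paper's argument is entirely self-contained and gives a bound valid at \emph{every} point, while yours imports a nontrivial metric result but yields a stronger pointwise blow-up. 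Two minor remarks: your justification that $g\in H^r(\mathbb D)$ (``the same estimate on circles, uniform in $R$'') implicitly needs the elementary bound $|1-Re^{it}|^2\ge 4R\sin^2(t/2)$ for $R<1$, which makes the uniformity explicit; and your closing contrast with \propref{conze} is slightly off, since the paper's proof here \emph{also} relies on continued fractions---it is just that for the Cauchy kernel the resulting estimate is much cleaner than for the function built in \propref{conze}.
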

\begin{proof} We produce a function $g$ with $g \in H^r(\mathbb T)$ for each $r<1$, such that 
	$\frac{T^ng}{n}$ does not converge a.e.

	Let $g(z)=\frac{1}{1-z}$; we show that $g(\e^{it})$ is in $H^r(\mathbb T)$, for each $r<1$. 
	We first prove $g \in L^r(\mathbb T)$:
	$$
\int_{0}^{2\pi} \Big|\frac{1}{1-e^{it}}\Big|^r dt =
	2^{-r}\int_{0}^{2\pi} \Big|\frac{1}{\sin(\frac{t}{2})}\Big|^rdt 
=2^{1-r} \int_{0}^{\pi/2} |\frac{1}{\sin(t)}\Big|^rdt.
$$
But for each $t \in [0,\pi/2)$, $\sin(t) \geq \frac{2}{\pi}t$. Therefore
$$
\int_{0}^{2\pi} \Big|\frac{1}{1-e^{it}}\Big|^r dt= 
2^{1-r} \sum_{n=1}^{+\infty}
\int_{\frac{\pi}{2(n+1)}}^{\frac{\pi}{2n}} \Big|\frac{1}{\sin(t)}\Big|^rdt \le
 2^{1-r} \sum_{n=1}^{+\infty}
\int_{\frac{\pi}{2(n+1)}}^{\frac{\pi}{2n}}\frac{\pi^r}{2^r}\frac{1}{t^r}dt \leq 
$$
$$
2^{1-2r} \pi^r \sum_{n=1}^{+\infty} 
\frac{1}{1-r}\Big(\Big(\frac{\pi}{2n}\Big)^{1-r}-\Big(\frac{\pi}{2(n+1)}\Big)^{1-r}\Big) \le
$$
$$
\lim_{N \to \infty} 2^{1-2r} \pi^r 
\frac{1}{1-r}\Big(\Big(\frac{\pi}{2}\Big)^{1-r}-\Big(\frac{\pi}{2(N+1)}\Big)^{1-r}\Big)
\leq \frac{2^{1-3r}\pi}{1-r}.
$$
	Thus $g \in L^r(\mathbb T)$. We now show $g$ is in $H^r(\mathbb T)$.
Let $P_n(\e^{it})=\frac1N\sum_{n=1}^N \sum_{k=0}^{n-1} \e^{ikt}$. Then
	for $0<t<2\pi$ we have 
	$$
	P_N(\e^{it})= (1-\frac1N\sum_{n=1}^N \e^{int})/(1-\e^{it}).
	$$ 
	Hence $P_N(\e^{it}) \to g(\e^{it})$ for $0<t<2\pi$, and 
$g(\e^{it})- P_N(\e^{it})= (\frac1N\sum_{n=1}^N \e^{int})/(1-\e^{it}).$ Thus,
 almost everywhere $|g(\e^{it})-P_N(\e^{it}|^r \le \frac1{|1-\e^{it}|^r} \in L^1(\mathbb T)$.
	By Lebesgue's dominated convergence theorem, 
	$\int_0^{2\pi}|g(\e^{it})-P_N(\e^{it}|^r dt \to 0$. Thus $g$ is approximated in $L^r$
	distance by analytic trigonometric polynomials, so $g \in H^r(\mathbb T)$,
\smallskip
	
As in Proposition \ref{conze}, we represent the transformation by $\theta x=x+\alpha \mod 1$,
	with $\alpha$ irrational. Let $(q_n)$ be the denominators in the continued fractions
	representation of $\alpha$. As noted in the proof of Proposition \ref{conze}, for 
$x \in [0,1)$ there is $\ell_n(x) \in [0,q_n]$ such that $x+\ell_n(x)\alpha \mod 1\in [0,2/q_n]$.
	Then, for any $x \in [0,1)$ we have
$$\Big|1-e^{2\pi i(\ell_n(x)\cdot \alpha+x)}\Big| \leq \frac{2}{q_n},$$
and we obtain
$$
	\frac{(T^{\ell_n(x)}g)(\e^{2\pi ix})}{\ell_n(x)} =
\frac1{\ell_n(x)\Big|1-e^{i(\ell_n(x).\alpha+x)}\Big|} \geq \frac{q_n}{2\ell_n(x)} \geq \frac12,
 $$
	so $\limsup \Big|\frac{T^ng}{n}\Big|\geq \frac12$ a.e. Since 
$\int_0^{2\pi} \Big|\frac{T^ng}n\Big|^r dt= \int_0^{2\pi} \Big|\frac{g}n\Big|^r dt \to 0$,
$\liminf \Big|\frac{T^ng}{n}\Big|=0$ a.e. by Fatou's lemma. We conclude that $\frac{T^ng}{n}$ 
does not converge a.e. To finish the proof, we put $f=(I-T)g$ and the proof is complete.
\end{proof}

{\bf Remarks.} 1. By Corollary \ref{MET} and Fatou's lemma, every
$f \in \overline{(I-T)H^r(\mathbb T)}$ satisfies $\liminf |M_nf| =0$ a.e.

2. Note that Proposition \ref{rho-powers} applies to any $f \in (I-T)H^r(\mathbb T)$.

3. Here is an alternative proof that $1/(1-\e^{it}) \in H^r(\mathbb T)$ for any $0<r<1$.
We prove that $g(z)=\frac{1}{1-z}$ is in $H^r(\mathbb D)$, for each $r<1$. Indeed, for $0<R<1$, write
$$
\int_{0}^{2\pi} \Big|\frac{1}{1-Re^{it}}\Big|^r dt=
\int_{0}^{2\pi} \Big(\Big|\frac{1}{1-Re^{it}}\Big|^2\Big)^{r/2} dt=
\int_{-\pi}^{\pi} \Big(\frac{1}{1+2R\cos(s)+R^2}\Big)^{r/2}ds .
$$
The last term is obtained by change of variable $s= t-\pi$.  Now, put $ u= \tan(s/2)$ to obtain
$$
\int_{0}^{2\pi} \Big|\frac{1}{1-Re^{it}}\Big|^r dt=\frac12 \int_{-\infty}^{+\infty}
\Big(\frac{1}{1+2R\frac{1-u^2}{1+u^2}+R^2}\Big)^{r/2} \frac{du}{1+u^2}.
$$
But for each $u \in \R$, $1+2R\frac{1-u^2}{1+u^2}+R^2=\Big(R+\frac{1-u^2}{1+u^2}\Big)^2+1-\Big(\frac{1-u^2}{1+u^2}\Big)^2 \geq \frac{4u^2}{(1+u^2)^2}$. Therefore
$$
\sup_{0<R<1}\int_{0}^{2\pi} \Big|\frac{1}{1-Re^{it}}\Big|^r dt \leq
\frac12 \int_{-\infty}^{\infty} \Big(\frac{(1+u^2)^2}{4u^2}\Big)^{r/2} \frac{du}{1+u^2}=
$$
$$
\frac1{2^{r+1}} \int_{-\infty}^{\infty} \frac{1}{|u|^r} \frac{du}{(1+u^2)^{1-r}}<+\infty.
$$
Hence $g\in H^r(\mathbb D)$, for each $r<1$, so $g(\e^{it}) \in H^r(\mathbb T)$. 
\smallskip

4. Let $T=T_\alpha$ be induced by an irrational rotation of the circle, and $g(z)=1/(1-z)$
for $z \in \mathbb D$. We showed above that $g(e^{it})\in H^r(\mathbb T)$. 
Since $g(z)=\sum_{n=0}^\infty z^n$ on $\mathbb D$, by Theorem \ref{Hp} 
$\int_0^{2\pi} \Big|\frac1n\sum_{k=0}^{n-1}T^k_\alpha g(\e^{it})-1\Big|^rdt \to 0$.
Motivated by \cite{SU}, for $\varepsilon >0$ we define 
$$
E_n:=\{(\alpha,x): \Big|\frac1n\sum_{k=0}^{n-1}T^k_\alpha g(\e^{2\pi ix})-1\Big| > \varepsilon\}
\quad \text{and} \quad 
E_{n,\alpha}:= \{x: \Big|\frac1n\sum_{k=0}^{n-1}T^k_\alpha g(\e^{2\pi ix})-1\Big|>\varepsilon\}.
$$
We denote by $\lambda$ Lebesgue's measure on $[0,1)$. By Markov's inequality,  for each irrational
 $\alpha$ in $[0,1]$ we obtain $\lambda(E_{n,\alpha})\to 0$ as $n \to \infty$. 
By Fubini's theorem and Lebesgue's theorem,
$$
\lambda\times \lambda(E_n)= \int_0^1 \lambda(E_{n,\alpha}) d\lambda(\alpha) \to 0 \quad 
\text{as} \quad  n\to \infty,
$$
which means that $ \displaystyle  \frac1n\sum_{k=0}^{n-1}\frac1{1-\e^{2\pi i(k\alpha+x)}}$ converges  to 1
in $\lambda\times \lambda$ measure. Then there exists a subsequence $(n_j)$ such that 
$\displaystyle  \frac1{n_j}\sum_{k=0}^{n_j-1}\frac1{1-\e^{2\pi i(k\alpha+x)}}$ converges  to 1 
$\ \lambda\times\lambda$ a.e. Hence the limiting distribution in  the Sinai-Ulcigrai
Theorem \cite[Theorem 1]{SU} is $\delta_1$.
\medskip

The proof of the next proposition shows that for $r <\frac12$ there are more functions
$f$ satisfying the property  in Proposition \ref{Hr-ConzeII}.

\begin{prop}  \label{Hr-Conze}
Let $T$ be induced by an irrational rotation of the circle. Then there exists $f$ such that
$f \in (I-T)H^r(\mathbb T)$ for any $0<r<\frac12$, and $M_n(T)f$ does not converge a.e.
\end{prop}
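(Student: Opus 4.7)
The plan is to imitate the proof of Proposition \ref{Hr-ConzeII} but with a function possessing a stronger (second order) singularity at $z=1$, namely $g(z) := 1/(1-z)^2$. Put $f := (I-T)g$. The task then splits into two parts: verifying that $g(e^{it})\in H^r(\mathbb T)$ for every $0<r<\tfrac12$ (so that $f\in (I-T)H^r(\mathbb T)$ throughout this range), and showing that $M_n(T)f$ does not converge a.e.

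For the first part I would follow the substitution scheme used in Remark 3 after Proposition \ref{Hr-ConzeII}. Writing $|1-Re^{it}|^{-2r} = (1-2R\cos t + R^2)^{-r}$ and then setting $s=t-\pi$ and $u=\tan(s/2)$, the supremum $\sup_{R<1}\int_0^{2\pi}|g(Re^{it})|^r\,dt$ is dominated by a constant multiple of
$$
\int_{-\infty}^{\infty}\frac{du}{|u|^{2r}(1+u^2)^{1-2r}},
$$
which is finite precisely when $2r<1$ (integrability near $0$) and $2-2r>1$ (integrability at $\infty$); both conditions reduce to $r<\tfrac12$. Hence $g\in H^r(\mathbb D)$ and its radial limit lies in $H^r(\mathbb T)$ for every such $r$, giving $f\in (I-T)H^r(\mathbb T)$ as required.

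For the second part, $M_n(T)f = (g-T^ng)/n$. Since $g\in L^r(\mathbb T)$ and Lebesgue measure is $T$-invariant, we have $\int_0^{2\pi}|T^ng/n|^r\,dt = n^{-r}\int_0^{2\pi}|g|^r\,dt \to 0$, so Fatou's lemma yields $\liminf_n |T^ng/n|=0$ a.e., and hence $\liminf_n |M_n(T)f|=0$ a.e. For the $\limsup$ I reuse the continued-fraction device from Propositions \ref{conze} and \ref{Hr-ConzeII}: represent $\theta x = x+\alpha \bmod 1$ with denominators $q_n$, and for each $x\in[0,1)$ choose $\ell_n(x)\in[0,q_n]$ with $|1-e^{2\pi i(x+\ell_n(x)\alpha)}|\le 2/q_n$. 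Then
$$
\frac{|T^{\ell_n(x)}g(e^{2\pi ix})|}{\ell_n(x)} = \frac{1}{\ell_n(x)\bigl|1-e^{2\pi i(x+\ell_n(x)\alpha)}\bigr|^2} \ge \frac{q_n^2}{4\ell_n(x)} \ge \frac{q_n}{4}\to\infty,
$$
because $q_n\to\infty$ at least exponentially. Since $|g(e^{2\pi ix})|/\ell_n(x)\to 0$ a.e., the same blow-up transfers to $M_n(T)f$, so $\limsup_n |M_n(T)f|=\infty$ a.e. and $M_n(T)f$ cannot converge a.e.

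The main obstacle I expect is the first step: confirming that the double pole at $z=1$ still lies inside $H^r$ for the full range $r<\tfrac12$, since crude Poisson-type estimates get tight here. The substitution argument of Remark 3 is designed for exactly this type of integral and simply requires replacing $r$ by $2r$, giving the sharp cutoff $r<\tfrac12$. Once $g\in H^r(\mathbb T)$ is in place, the remainder is a routine variant of Proposition \ref{Hr-ConzeII}, with the sharper singularity producing the stronger lower bound $q_n/4$ in place of $1/2$.
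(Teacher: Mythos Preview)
Your argument is correct, but it follows a different route from the paper's proof. You take the explicit function $g(z)=1/(1-z)^2$, verify $g\in H^r(\mathbb D)$ for $r<\tfrac12$ by the substitution of Remark~3 with $r$ replaced by $2r$, and then repeat the continued-fraction lower bound of Proposition~\ref{Hr-ConzeII}, now getting $|T^{\ell_n(x)}g|/\ell_n(x)\gtrsim q_n\to\infty$ instead of a constant. This is a clean and self-contained construction of a single $f$ that works for every $r<\tfrac12$.

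The paper instead starts from an \emph{arbitrary} $0\le g\in L^1(\mathbb T)$ with $\limsup_n n^{-r}T^ng=\infty$ a.e.\ (e.g.\ the function of Proposition~\ref{conze}), forms $h=g+i\tilde g\in H^\beta(\mathbb T)$ for all $\beta<1$ via the harmonic conjugate, and then puts $f=(I-T)h^2$; since $h\in H^{2r}$ implies $h^2\in H^r$, this lands in $(I-T)H^r$ for $r<\tfrac12$, and non-convergence of $M_nf$ follows from $(T^nh^2/n)^{1/2}=(T^ng+iT^n\tilde g)/n^{1/2}$. The payoff of the paper's approach is generality: as noted just before the proposition, it manufactures many examples, one for each admissible $g$. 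Your approach trades that flexibility for explicitness and a shorter proof; in fact your $g$ is precisely the square of the function used in Proposition~\ref{Hr-ConzeII}, so your argument can be read as the ``hands-on'' special case of the paper's squaring trick.
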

\begin{proof} Let $0\le g\in L^1(\mathbb T)$ such that  $\limsup_n n^{-r}T^ng=\infty$ a.e.
 for any $r<1$, e.g. the function obtained in Proposition  \ref{conze}; we may assume 
(by adding 1) that $g >0$ everywhere. Let
	$$
	g(R\e^{it}):= \sum_{n=-\infty}^\infty R^{|n|}\hat g(n)\e^{int}, \qquad (0\le R<1)
	$$
where $\hat{g}(n)$ is the Fourier coefficient of $g$ at $n$,
	be the extension of $g$ to the unit disc, and denote by $\tilde{g}$ 
its harmonic conjugate, that is, 
$$
\tilde{g}(Re^{i t})=-i \sum_{n=-\infty}^{+\infty}\textrm{sgn}(n)R^{|n|} \hat{g}(n) e^{int},
	\qquad (0\le R<1)
$$ 
where $\textrm{sgn}(n)$ is the sign function. Then     
$\tilde g(\e^{it})= \lim_{R\to 1}\tilde{g}(Re^{i t})$ is defined a.e. \cite[Lemma 1.3, 
p. 64]{Katz}. 
	The function $G(z):=g(R\e^{it})+i\tilde g(R\e^{it})$ is analytic on $\mathbb D$,
	and  $\lim_{R\to 1}G(R\e^{it})=h(\e^{it}):=g(\e^{it})+i \tilde{g}(\e^{it})$ exists a.e.
	Since $g \in L^1(\mathbb T)$, by  \cite[Corollary 1.6, p. 66]{Katz} (see the 
	proof of \cite[Corollary 3.9, p. 87]{Katz}), we have 
$h(\e^{it}):=g(\e^{it})+i \tilde{g}(\e^{it}) \in H^\beta(\mathbb \T)$ for any $\beta<1$.

	Fix $0<r<\frac12$.  Put $h_r(\e^{it}):=h^2(\e^{it}) = \lim_{R\to 1}[G(R\e^{it})]^2$.
	Since $G^2$ is analytic, and $G \in H^{2r}(\mathbb D)$, 
	we have $G^2 \in H^r(\mathbb D)$.  Then $\lim_{R\to1}G^2(R\e^{it})=h^2(\e^{it})$ is in
	$H^r(\mathbb T)$.
Since $\Re (h)\ge 1$, $\log   h$ is well-defined, and $h= \e^{\frac12 2\Log h}=(h^2)^{1/2}$.
But $\frac{T^nh_r}{n}$ does not converge a.e., since
$\big(\frac{T^nh_r}{n}\big)^{1/2}=\frac{T^ng+i T^n\tilde{g}}{n^{1/2}}$ does not converge. 
Consequently, when $f=(I-T)h^2$, $M_n(T)f$ does not converge a.e.
\end{proof}
\section{Appendix  by Guy Cohen: on rates of convergence}

Proposition \ref{conze2} raises the question of the rate of a.e. convergence to zero of 
$\frac1n T^ng$ when $g \in L^1(\mu)$. When $g \in L^p$, $1<p<\infty$, we have 
$n^{-1/p}T^ng \to 0$ a.e., so we have a rate of $1/n^{1-1/p}$. 
Proposition \ref{conze2} yields $h \in L^1$ which does not have this rate for any $p>1$.
The following proposition extends Proposition  \ref{conze2}, with  similar use of Rokhlin's lemma.

\begin{prop} \label{guy}
Let $T$ be induced by an ergodic aperiodic measure preserving transformation $\theta$
on a Lebesgue space $(X,\Sigma,\mu)$, and let $(b_n)$ be a sequence tending to infinity, 
such that $b_n/n$ decreases to zero. Then there exists $0\le h \in L^1(\mu)$ such that 
$\limsup_{n\to\infty} b_n\cdot n^{-1}T^nh=\infty$ a.e.
\end{prop}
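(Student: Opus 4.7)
The plan is to follow very closely the Rokhlin tower construction used in the proof of Proposition \ref{conze2}, replacing only the choice of the tower heights $k_n$ to accommodate a general sequence $(b_n)$ instead of the specific factor $n^{-r}$.

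First, I choose sequences $\varepsilon_n = e^{-n}$ and $k_n \in \mathbb{N}$ such that $b_{k_n} \ge n^3$ (possible since $b_m\to\infty$), and apply Rokhlin's lemma to obtain $B_n \in \Sigma$ with $(\theta^{-j}B_n)_{0\le j<k_n}$ disjoint and $k_n\mu(B_n) > 1 - \varepsilon_n$. Set $X_n := \bigcup_{j=0}^{k_n-1}\theta^{-j}B_n$ and
$$
h := \sum_{n=1}^\infty n^{-2}\,\frac{1}{\mu(B_n)}\,\mathbf 1_{B_n},
$$
which is in $L^1(\mu)$ since $\int h\,d\mu = \sum n^{-2} < \infty$.

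Next, for fixed $N \ge 1$ define $X_n^{(N)} := \bigcup_{j=N}^{k_n-1}\theta^{-j}B_n$; using disjointness and the Rokhlin estimate, $\mu(X_n^{(N)}) \ge (k_n-N)\mu(B_n) > 1-\varepsilon_n - N/k_n \to 1$ as $n\to\infty$. For $x \in X_n^{(N)}$ there exists $j = j_n(x) \in [N,k_n-1]$ with $\theta^j x \in B_n$, so $T^jh(x) \ge n^{-2}/\mu(B_n) \ge n^{-2}k_n$ (using $k_n\mu(B_n)\le 1$). Since $b_m/m$ is decreasing and $j\le k_n$, we have $b_j/j \ge b_{k_n}/k_n$, so
$$
\frac{b_j\,T^jh(x)}{j} \ge \frac{b_{k_n}}{k_n}\cdot n^{-2}k_n = \frac{b_{k_n}}{n^2} \ge n.
$$

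Finally, for any $M > 0$ and $N\ge 1$, pick $n_0 = n_0(M,N)$ so that $n\ge n_0$ implies $n > M$ and $k_n > N$; then $\mu\{x: \sup_{j\ge N} b_j T^jh(x)/j > M\} \ge \mu(X_n^{(N)}) \to 1$. Hence $\sup_{j\ge N} b_j T^jh/j = \infty$ a.e., and intersecting over $N\in\mathbb N$ yields $\limsup_{n\to\infty} b_n\cdot n^{-1}T^nh = \infty$ a.e.

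The only step requiring any care is the argument that we obtain $\limsup = \infty$ rather than just $\sup = \infty$; this is handled precisely by introducing the truncated towers $X_n^{(N)}$, whose measure still tends to $1$ because $N/k_n \to 0$ for our rapidly growing $k_n$. The essential use of the hypothesis that $b_n/n$ be \emph{decreasing} is in the inequality $b_j/j \ge b_{k_n}/k_n$, which lets us transfer the smallness of $b_{k_n}/k_n$ into a uniform lower bound along the orbit segment, leaving $b_{k_n}/n^2$ as the free parameter to be driven to infinity by our choice of $k_n$.
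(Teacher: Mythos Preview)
Your proof is correct and follows the same Rokhlin--tower strategy as the paper, with the identical function $h=\sum_n n^{-2}\mu(B_n)^{-1}\mathbf 1_{B_n}$. The difference lies in how the tower heights are chosen and how the monotonicity hypothesis is exploited. The paper works with $a_n=n/b_n$ (increasing), introduces an auxiliary sequence $\ell_n$ and an ``inverse'' $c_n=\min\{m:a_m\ge n\}$, takes $k_n=c_{\ell_n}$, and then has to verify separately that $k_n/(n^{2}\ell_n)\to\infty$. You bypass all of this: choosing $k_n$ with $b_{k_n}\ge n^{3}$ and using directly that $b_j/j\ge b_{k_n}/k_n$ for $j\le k_n$ gives the clean lower bound $b_{k_n}/n^{2}\ge n$ in one line. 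Your truncated towers $X_n^{(N)}$ also make the passage from $\sup$ to $\limsup$ explicit, whereas the paper leaves this implicit (it is justified there only because each term $b_j\,j^{-1}T^jh$ is a.e.\ finite). Overall your argument is a genuine streamlining of the paper's proof.
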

\begin{proof}
Put $a_n=n/b_n$. Then $a_n$ increases to infinity and $a_n/n$ tends to zero.
For a fast increasing sequence $(\ell_n) \subset \mathbb N$ (to be determined later),
let $k_n $ be the first index such that $a_{k_n} \ge \ell_n$, so $k_n$ is increasing,
and $k_n^{-1}\ell_n \le k_n^{-1} a_{k_n} \to 0$.

Let $\varepsilon_n$ decrease to zero. By Rokhlin's lemma there is $B_n \in \Sigma$ such that
$(\theta^{-j}B_n)_{0\le j < k_n}$ are disjoint and $\mu(X_n) >1-\varepsilon_n$, where
$X_n=\cup_{j=0}^{k_n-1} \theta^{-j}B_n $. We define 
$h=\sum_{n=1}^\infty n^{-2}\frac1{\mu(B_n)}1_{B_n}$, so $0 \le h \in L^1$.
For $x \in X_n\setminus B_n$ there exists $1\le j \le k_n-1$ such that 
$x \in \theta^{-j}B_n$, and then
$$
\frac{T^jh(x)}{a_j} \ge \frac{h(\theta^jx)}{a_{k_n-1}} \ge 
\frac{n^{-2}}{\mu(B_n)}\cdot\frac1{a_{k_n-1}} \ge \frac{n^{-2}}{\mu(B_n)}\cdot\frac1{\ell_n} 
\ge \frac{k_n}{n^{2}\ell_n}.
$$
The sequence $(a_n)$ is given (through $(b_n)$ ), and the sequence $(\ell_n)$ can be chosen 
to grow fast enough so $\frac{k_n}{n^{2}\ell_n} \to \infty$ (see below). Then for $M>0$ we have
$\frac{k_n}{n^{2}\ell_n}>M$ for $n>N_M$.  We then have 
$$
b_j\cdot \frac{T^jh(x)}j=\frac{T^jh(x)}{a_j} > M \qquad
\qquad (n>N_M, \ x \in X_n\setminus B_n).
$$
Hence 
 $$
\mu\Big(\Big\{x: \sup_{j\ge 1} b_j\cdot \frac{T^jh(x)}{j} >M \Big\} \Big) \ge 
 \mu(X_n \setminus B_n)=(k_n-1)\mu(B_n) >1-\varepsilon_n - k_n^{-1}   \qquad (n>n_M).
$$
Letting $n \to \infty$ we obtain $\sup_j b_j\cdot j^{-1}T^jh >M$ a.e. Since this is for any $M>0$,  
	we obtain $\limsup_{j\to\infty} b_j\cdot j^{-1}T^jh =\infty$ a.e.

We now show how to choose $(\ell_n)$. We define an "inverse" of $(a_n)$ by 
$c_n= \min\{m: a_m\ge n\}$. It is easy to show that $c_n$ is increasing to infinity. 
When $c_n=m$, we have $a_{m-1}<n \le a_m$, so 
$$
\frac{c_n}n =\frac{m}n \ge \frac{m}{a_m} \to \infty.
$$ 
Fix $n$; then for $j >J_n$   we have $j^{-1}c_j> n^3$. Put $\ell_n=J_n+1$. By definition
$k_n=c_{\ell_n}$, so we have
$$
\frac{k_n}{n^2 \ell_n}=\frac{c_{\ell_n}}{n^2\ell_n}>n,
$$
hence $\frac{k_n}{n^2 \ell_n} \to \infty$.
\end{proof}

{\bf Remarks.} 1. The function $h$ depends in general on $(b_n)$, since $(\ell_n)$ and $(k_n)$ do.

2. In Proposition \ref{conze2}, obtained when $b_n=n^{1-r}$, $0<r<1$, 
 $h$ does not depend on $r$.

3. Proposition \ref{guy} does not apply to the sequence $b_n=n$. However,
for any unbouded $g \in L^1$, infinite returns of $\theta^nx$ to the set $\{y: |g(y)|>M\}$
show that $\limsup_n |T^ng| \ge M$ a.e. Hence $\limsup_n |T^ng| =\infty$ a.e.
(this is the idea in \cite[p. 391]{Ha} for proving $\limsup_n |T^ng| =\|g\|_\infty$ for $g \in L_\infty$).

\begin{cor} \label{no-rate}
Let $T$ be induced by an ergodic aperiodic measure preserving transformation $\theta$
on a Lebesgue space $(X,\Sigma,\mu)$, and let $0<\delta_n \to 0$. If there exists $(\gamma_n)$
such that $\gamma_n \to 0$, $\ n\gamma_n$ increase to $\infty$, and 
$\delta_n \le \gamma_n$ for every $n$, then there exists an $L^1$-coboundary $f \in (I-T)L^1$ 
such that $\limsup_{n\to\infty} \delta_n^{-1}|M_nf |\to \infty$ a.e.
\end{cor}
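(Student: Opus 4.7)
The plan is to reduce Corollary~\ref{no-rate} to Proposition~\ref{guy} via the substitution $b_n := 1/\gamma_n$, and then take $f$ to be the coboundary of the function $h$ produced there.

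First I verify that $(b_n) = (1/\gamma_n)$ satisfies the hypotheses of Proposition~\ref{guy}: since $\gamma_n > 0$ and $\gamma_n \to 0$, we have $b_n \to \infty$; and $b_n/n = 1/(n\gamma_n)$ decreases to $0$, because by assumption $n\gamma_n$ increases to $\infty$. Applying Proposition~\ref{guy} yields $0 \le h \in L^1(\mu)$ with
$$
\limsup_{n\to\infty} \frac{b_n \cdot T^n h}{n} = \limsup_{n\to\infty} \frac{T^n h}{n\gamma_n} = \infty \qquad \text{a.e.}
$$

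Next, set $f := (I-T)h$, which lies in $(I-T)L^1$ since $h \in L^1$ and $T$ is an isometry of $L^1$. The averages telescope to $M_n f = \frac1n(h - T^n h)$, so the reverse triangle inequality gives
$$
\gamma_n^{-1} |M_n f| \ge \frac{T^n h}{n \gamma_n} - \frac{h}{n \gamma_n}.
$$
The subtracted term tends to $0$ a.e. (because $1/(n\gamma_n) \to 0$ and $h$ is a.e.\ finite), while by the displayed identity the $\limsup$ of the first term is $\infty$ a.e. Hence $\limsup_n \gamma_n^{-1} |M_n f| = \infty$ a.e., and since the hypothesis $\delta_n \le \gamma_n$ gives $\delta_n^{-1} \ge \gamma_n^{-1}$, the same conclusion holds with $\delta_n$ in place of $\gamma_n$.

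No serious obstacle is anticipated: the corollary is essentially a repackaging of Proposition~\ref{guy} in the language of $L^1$-coboundaries. The only point requiring a little care is matching interfaces with Proposition~\ref{guy}; this is precisely why the hypothesis asks that $n\gamma_n$ \emph{increase} to $\infty$ (not merely tend to $\infty$), since Proposition~\ref{guy} needs $b_n/n$ to be monotone decreasing. The domination $\delta_n \le \gamma_n$ plays no role until the final step, where it transfers the conclusion from the auxiliary sequence $\gamma_n$ to the prescribed sequence $\delta_n$.
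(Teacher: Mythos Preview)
Your proof is correct and follows essentially the same route as the paper: set $b_n=1/\gamma_n$, invoke Proposition~\ref{guy} to obtain $h$, put $f=(I-T)h$, and then use $\delta_n\le\gamma_n$ together with $h/(n\gamma_n)\to 0$ a.e.\ to pass from $\limsup_n T^nh/(n\gamma_n)=\infty$ to $\limsup_n\delta_n^{-1}|M_nf|=\infty$. The paper's proof is terser in the final chain of inequalities but the argument is the same.
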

\begin{proof}
Let $b_n=1/\gamma_n$; then $(b_n)$ satisfies the assumptions of Proposition \ref{guy},
and let $h$ be the function obtained in Proposition \ref{guy}. Then $f=(I-T)h$ satisfies 
$$
\limsup_{n\to\infty} \delta_n^{-1}|M_nf | \ge \limsup_{n\to\infty} \gamma_n^{-1}|M_nf | =
\limsup_n b_n n^{-1}|h-T^nh| =\infty \quad a.e.
$$
\end{proof}
{\bf Remark.} Krengel \cite{Kr} and Kakutani-Petersen \cite{KP} proved that for any 
$0<\delta_n \to 0$ (with no additional conditions) there exists a bounded $f$ 
such that $\limsup_{n\to\infty} \delta_n^{-1}|M_nf |\to \infty$ a.e. The novelty in Corollary
\ref{no-rate} is in obtaining $f$ a coboundary.

\begin{prop}\label{delB1} 
Let $T$ be induced by an ergodic aperiodic probability preserving transformation $\theta$
on a Lebesgue space $(X,\Sigma,\mu)$. and let $(b_n)$ be a sequence tending to infinity, 
such that $b_n/n$ decreases to zero. Then there exists a dense $G_\delta$ subset 
${\Cal F}\subset L^1(X,\mu)$, such that 
 $\limsup_n b_n\cdot n^{-1}|T^nh| =\infty$ a.e.  for any $h \in {\Cal F}$.
\end{prop}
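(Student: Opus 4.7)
The plan is to apply Theorem \ref{LJB} with $p=1$ to the linear operators $T_n f := (b_n/n)\, T^n f$ on $L^1(X,\mu)$. Each $T_n$ is continuous on $L^1$ (since $T$ is an $L^1$-isometry and $b_n/n$ is a scalar), and continuity on $L^1$ implies continuity in measure. Thus only the maximal hypothesis of Theorem \ref{LJB} must be verified: given $\epsilon>0$ and $K>0$, produce $h \in L^1$ with $\|h\|_1 \le 1$ such that
\[
\mu\Big\{x : \sup_{n \ge 1} |T_n h(x)| > K \Big\} > 1 - \epsilon.
\]

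For this I would reopen the Rokhlin-lemma construction used in the proof of Proposition \ref{guy}. For the given sequence $(b_n)$, that proof produces a nonnegative function $h_0 = \sum_{k \ge 1} k^{-2} \mu(B_k)^{-1} \mathbf{1}_{B_k} \in L^1$, with universal norm $\|h_0\|_1 = \sum_{k \ge 1} k^{-2}$, and shows that for every $M > 0$ and all sufficiently large $n$,
\[
\mu\Big\{x : \sup_{j \ge 1} (b_j/j)\, T^j h_0(x) > M \Big\} \ge 1 - \varepsilon_n - k_n^{-1}.
\]
Since $\varepsilon_n + k_n^{-1} \to 0$, setting $h := h_0/\|h_0\|_1$ and applying the estimate with $M = K \|h_0\|_1$ yields the required maximal inequality for $T_n h$. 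This normalization is essentially the only place where any care is needed.

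Theorem \ref{LJB} then produces a dense $G_\delta$ subset $\mathcal F \subset L^1(X,\mu)$ on which $\sup_{n \ge 1} |T_n h| = \infty$ almost everywhere. To upgrade the supremum to a limit superior, I would note that at a.e. $x$ every value $(b_n/n)|T^n h(x)|$ is finite (as each $T^n h \in L^1$), so at such $x$ the sequence $\{(b_n/n)|T^n h(x)|\}_{n \ge 1}$ is a real-valued sequence with infinite supremum; a routine induction then extracts indices $n_k \uparrow \infty$ with $(b_{n_k}/n_k)|T^{n_k} h(x)| > k$, giving $\limsup_n (b_n/n)|T^n h| = \infty$ a.e. for every $h \in \mathcal F$. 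The argument is the direct one-parameter analogue of Proposition \ref{delB}, with Proposition \ref{guy} replacing Proposition \ref{conze2} as the source of the single function satisfying the maximal estimate; no genuinely new obstacle arises beyond the verification of that estimate, which is already effectively contained in the proof of Proposition \ref{guy}.
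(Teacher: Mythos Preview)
Your proof is correct and takes essentially the same approach as the paper, which simply states that the argument is similar to that of Proposition~\ref{delB} with Proposition~\ref{guy} replacing Proposition~\ref{conze2}. Your explicit upgrade from $\sup$ to $\limsup$ via finiteness of each term is a nice touch that the paper leaves implicit.
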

The proof is similar to that of Proposition \ref{delB}, using Proposition \ref{guy}.
\bigskip

{\bf Acknowledgement.} The first author is grateful to the Center for Advanced Studies in 
Mathematics of Ben-Gurion University for its hospitality and support. The second author 
is grateful to the University of Rouen-Normandie for its hospitality and support.

\bigskip

\end{document}